\theoremstyle{plain}
\newtheorem{theorem}{Theorem}[section]
\newtheorem{lemma}[theorem]{Lemma}
\theoremstyle{definition}
\newtheorem{remark}[theorem]{Remark}
\newtheorem{convention}[theorem]{Convention}
\numberwithin{equation}{section}
\newcommand{\Z}{\mathbb Z}
\newcommand{\K}{\mathcal K}
\newcommand{\M}{\mathfrak M}
\newcommand{\Mpl}{\mathfrak M_+}
\newcommand{\N}{\mathbb N}
\DeclareMathOperator*{\esssup}{ess\,sup}
\renewcommand{\d}{{\fam0 d}}
\newcommand{\Lpv}{\Lambda^{p}(v)}
\newcommand{\Gpuv}{\Gamma^{p}_u(v)}
\newcommand{\Gpuvweak}{\Gamma^{\infty}_u(v)}
\newcommand{\ds}{\,\mathrm d s}
\newcommand{\dt}{\,\mathrm d t}
\newcommand{\dy}{\,\mathrm d y}
\newcommand{\dnu}{\,\mathrm d \nu}
\newcommand{\dui}{\,\mathrm d U^{-1}}
\newcommand{\f}{f^*}
\def\paragraph{\bigskip\@startsection{paragraph}{4}%
  \z@\z@{-\fontdimen2\font}%
  {\normalfont\bfseries}}
\begin{document}

\title[Discretization with no restrictions on weights]{Discretization and 
antidiscretization of Lorentz norms with~no~restrictions on weights}
\author{Martin K\v{r}epela, Zden\v ek Mihula, Hana Tur\v{c}inov\'{a}}

\address{Martin K\v{r}epela, Czech Technical University in Prague, Faculty of Electrical Engineering, Department of Mathematics, Technick\'a~2, 166~27 Praha~6, Czech Republic}
\email{martin.krepela@fel.cvut.cz}
\urladdr{\href{https://orcid.org/0000-0003-0234-1645}{0000-0003-0234-1645}}
\address{Zden\v ek Mihula, Czech Technical University in Prague, Faculty of Electrical Engineering, Department of Mathematics, Technick\'a~2, 166~27 Praha~6, Czech Republic --- and --- Charles University, Faculty of Mathematics and Physics, Department of Mathematical Analysis, Sokolovsk\'a~83, 186~75 Praha~8, Czech Republic}
\email{mihulzde@fel.cvut.cz}
\email{mihulaz@karlin.mff.cuni.cz}
\urladdr{\href{https://orcid.org/0000-0001-6962-7635}{0000-0001-6962-7635}}
\address{Hana Tur\v{c}inov\'{a}, Charles University, Faculty of Mathematics and Physics, Department of Mathematical Analysis, Sokolovsk\'a~83, 186~75 Praha~8, Czech Republic}
\email{turcinova@karlin.mff.cuni.cz}
\urladdr{\href{https://orcid.org/0000-0002-5424-9413}{0000-0002-5424-9413}}

\subjclass[2020]{46E30, 46E35}
\keywords{rearrangement-invariant spaces, weights, discretization, Lorentz spaces, embeddings}
\thanks{The first and second authors were supported by the project OPVVV CAAS CZ.02.1.01/0.0/0.0/16\_019/0000778. The second and third authors were supported by the grant P201-18-00580S of the Czech Science Foundation, by the grant SVV-2020-260583, and by Charles University Research program No.~UNCE/SCI/023.}

\begin{abstract}
        We improve the discretization technique for weighted Lorentz norms by eliminating all ``non-degeneracy'' restrictions on the involved weights.
        We use the new method to provide equivalent estimates on the optimal constant $C$ such that the inequality
                $$
                        \left( \int_0^L (\f(t))^{q} w(t)\dt \right)^\frac 
1{q} \le C \left( \int_0^L \left( \int_0^t u(s)\ds \right)^{-p} \left( \int_0^t \f(s) u(s) \ds \right)^p v(t) \dt \right)^\frac 1{p}
                $$
        holds for all relevant measurable functions, where $L\in(0,\infty]$, $p, q \in (0,\infty)$ and $u$, $v$, $w$ are locally integrable weights, $u$ being strictly positive.
        In the case of weights that would be otherwise excluded by the restrictions, it is shown that additional limit terms naturally appear in the characterizations of the optimal $C$.
        A~weak analogue for $p=\infty$ is also presented.
\end{abstract}

\date{\today}

\maketitle

\setcitestyle{numbers}
\bibliographystyle{plainnat}

\section{Introduction}

Consider the problem of determining the optimal (i.e., least) constant $C\in[0,\infty]$ such that the inequality
\begin{equation}\label{I:main}
        \left( \int_0^L (\f(t))^{q} w(t)\dt \right)^\frac 1{q} \le C \left( \int_0^L \left( \frac 1{U(t)} \int_0^t \f(s) u(s) \ds \right)^p v(t) \dt \right)^\frac 1{p}
\end{equation}
is satisfied for all functions $f$ defined on an~``appropriate'' measure space, where $\f$ stands for the nonincreasing rearrangement of $f$ and $U(t)=\int_0^t u(s)\ds$. The values of the other involved parameters are 
fixed, namely $L\in(0,\infty]$, $p, q \in (0,\infty)$, and $u$, $v$, $w$ are locally integrable, nonnegative weights on $(0,L)$, $u$ strictly positive.

In other words, this corresponds to the problem of characterizing the embedding $\Gamma^{p}_u(v) \hookrightarrow \Lambda^{q}(w)$ (all the involved 
notation is discussed in Section~\ref{sec:prel} below).
This problem has been extensively studied and there are several possible approaches leading to a~solution. Let us briefly inspect what is at our disposal.

Gogatishvili and Pick provided in \cite{GP:03} what is currently the most 
cited solution to the problem. It relies on a~method of discretization, in which the integral expressions in \eqref{I:main} are reformulated in terms of specific sequences. What was innovative in their paper was the so-called ``antidiscretization'' part, where the discrete conditions were transformed back into integral ones. The technique from \cite{GP:03} is our 
point of departure and will be discussed in detail. We note that, in what 
follows, by ``discretization'' we will actually refer to the whole process including the antidiscretization part.

Although \cite{GP:03} satisfied the demand for conditions in a~form that may be easily verified, there is a~catch. Namely, only the case $L=\infty$ is covered and it is assumed there that $v$ is ``non-degenerate'' with respect to $u$ in the sense that
\begin{equation*}
        \int_0^\infty \frac{v(s)\ds}{U^p (s) + U^p (t)} < \infty \quad\text{ for all } t\in(0,\infty),\qquad \int_0^1 \frac{v(s)\ds}{U^p (s)} = \int_1^\infty v(s) \ds = \infty.
\end{equation*}
It turns out that the first of these conditions can be assumed without loss of generality (see the beginning of the proof of Theorem~\ref{thm:HanickaToLambda}); therefore, what rules out ``degenerated'' weights is the condition
\begin{equation}\label{intro:condofnondegenerancy}
\int_0^1 \frac{v(s)\ds}{U^p (s)} = \int_1^\infty v(s) \ds = \infty.
\end{equation}
Unfortunately, this means that, besides leaving out some ``degenerated'' weights on $(0,\infty)$, the result cannot be used in any direct way (e.g., using the obvious idea of truncating $v$ and $w$) in the case where $L<\infty$. A~finite $L$ appears naturally when the considered weighted Lorentz spaces consist of functions defined on a finite measure space. Such a setting is perfectly reasonable and it even becomes inevitable when embeddings of weighted Sobolev-Lorentz spaces on domains into Lorentz $\Lambda$-spaces and/or their compactness are studied, which is an~application that we have in mind. More details on this matter are given in Remark~\ref{rem:sob}.

A~completely different way of approaching the problem \eqref{I:main} was found independently by Sinnamon in \cite{Sinnamon02, Sinnamon2003}. It is 
based on reformulating \eqref{I:main} as an~inequality for quasinconcave functions. His method is actually far simpler than discretization. However, the goal of \cite{Sinnamon02} was to describe the K\"othe dual of Lorentz $\Gamma$-spaces; therefore only the case $q = 1$ and $u\equiv 1$ was considered there. It appears that there is no easy way to modify his proof technique to allow other values of $q$. Nevertheless, the result obtained in \cite{Sinnamon02} does not require any additional assumptions 
on weights and it gives a hint on how the conditions characterizing \eqref{I:main} change in the general case. Namely, there appear certain limit terms of the same nature as in other embeddings between Lorentz $\Lambda$ 
and $\Gamma$-spaces (cf.~\cite{CPSS}). Another description of the K\"othe 
dual of Lorentz $\Gamma$-spaces was given by Gogatishvili and Kerman in \cite{GK}. Their method is not built on discretization either, and so their result does not require any extra assumptions on weights, but, again, it covers the problem \eqref{I:main} only for $q = 1$ and $u\equiv 1$.

In \cite{EGO:18}, the discretization technique was modified in order to encompass ``degenerated'' weights as well. Our goal in the present paper is to use this modification to finally provide a~complete characterization 
of the optimal constant $C$ in \eqref{I:main} without the restriction \eqref{intro:condofnondegenerancy} on the 
weights $u$ and $v$, and for all positive values of $p$ and $q$, 
including the ``weak-type'' modification for $p=\infty$ as well.

It should be noted that inequality \eqref{I:main} with a~general $u$ in fact follows from the case $u\equiv 1$. Indeed, since $u$ is locally integrable and positive a.e.~in $(0,L)$, the function $U$ is absolutely continuous and $U'>0$ a.e.~in $(0,L)$; hence its inverse $U^{-1}$ is also absolutely continuous. Thus, performing the change of variables $t\mapsto U^{-1}(t)$ and considering that, since $U^{-1}$ is strictly increasing, a~function $h$ is nonincreasing if and 
only if $h\circ U^{-1}$ is nonincreasing, we observe that \eqref{I:main} holds for all $f\in\M(0,L)$ if and only if 
	$$
		\left( \int_0^{U(L)} (g^*(t))^{q} w(U^{-1}(t))\dui(t) \right)^\frac 1{q} \le C \left( \int_0^{U(L)} \left( \frac 1t \int_0^t g^*(s) \ds \right)^p v(U^{-1}(t)) \dui(t) \right)^\frac 1{p}
	$$
holds for all $g\in\M(0,U(L))$ (in here, $(0,L)$ may be, of course, replaced by any nonatomic measure space of measure $L$). Hence, to tackle \eqref{I:main} it suffices to consider $u\equiv 1$ and, at the end, perform the suggested change of variables to get the general version. Nevertheless, in the proofs of the main results in Section 4, we use the discretization technique directly with the general $u$. There would be little difference if we used $u\equiv1$, namely only in writing $t$ instead of $U(t)$ in the proofs. By using $U(t)$ we also avoid the need for performing 
the substitution to obtain the final result.

\section{Preliminaries}\label{sec:prel}

Let us summarize the notation and auxiliary results that we shall use in this paper.
Throughout the paper, $L\in(0,\infty]$ is a fixed positive (possibly infinite) number.
\begin{convention}\label{conv}  We adhere to the following conventions:
        \begin{enumerate}[(i)]
                \item\label{conv:limit} If $f$ is a function on $(0,L)$,  
then $f(0)$ and $f(L)$ stand for $\lim_{t\to0^+}f(t)$ and $\lim_{t\to L^-}f(t)$, respectively.
                \item\label{conv:indefinite} All of the expressions $\frac1{\infty}$, $\frac{\infty}{\infty}$, $\frac{0}{0}$ and $0\cdot\infty$ are to be interpreted as $0$. The expression $\frac{1}{0}$ is to be interpreted as $\infty$.
        \end{enumerate}
\end{convention}

Let $(X,\mu)$ be a~nonatomic, $\sigma$-finite measure space such that $\mu(X)=L$. By $\M_\mu (X)$ we denote the set of all $\mu$-measurable (extended) real-valued functions defined on $X$. The symbol $\M(0,L)$ denotes 
the set of all Lebesgue-measurable functions on $(0,L)$, and $\Mpl(0,L)$ denotes the set of all $f\in\M(0,L)$ such that $f\ge 0$ a.e.\ on $(0,L)$.

We say that a function $v\in\Mpl(0,L)$ is a \emph{weight} on $(0,L)$ if $0<V(t)<\infty$ for every $t\in(0,L)$, where
\begin{equation*}
        V(t)=\int_0^tv(s) \ds, \quad t\in[0,L].
\end{equation*}
Furthermore, we denote
\begin{equation*}
        V(a,b)=\int_a^bv(s) \ds,\quad 0\le a < b\le L.
\end{equation*}

If $f\in\M_\mu (X)$, the symbol $\f$ denotes the \emph{nonincreasing rearrangement} of $f$, that is,
\begin{equation*}
f^*(t)=\inf\{\lambda\in[0,\infty)\colon\mu(\{x\in X\colon|f(x)|>\lambda\})\leq t\},\ t\in(0, L),
\end{equation*}
(for details see \cite{BS}).
Let $p\in(0,\infty]$ and $v$ be a weight on $(0,L)$.
We define the following functionals: 
\begin{equation*}
        \|f\|_{\Lpv}  = \begin{cases}
                \displaystyle\left( \int_0^L (\f(t))^pv(t)\dt \right)^\frac1p\quad&\text{if } p\in(0,\infty),\\
                \displaystyle\esssup_{t\in(0,L)}f^*(t)v(t)\quad&\text{if } p=\infty.
        \end{cases}
\end{equation*}
Let $u$ be an a.e.~positive weight on $(0, L)$. Let
\begin{equation*}
        f_u^{**}(t)=\frac1{U(t)}\int_0^t\f(s)u(s)\ds,\ t\in(0,L),
\end{equation*}
be the nonincreasing maximal function of $f$ with respect to $u$ (cf.~\cite{GP:03}).
We define the functional
\begin{equation*}
        \|f\|_{\Gpuv} = \|f^{**}_u\|_{\Lpv}.
\end{equation*}
Accordingly, we denote
	\begin{align*}
	        \Lpv & = \left\{ f\in\M_\mu (X):\ \|f\|_{\Lpv}<\infty \right\},\\
	        \Gamma_u^p(v) & = \left\{ f\in\M_\mu (X):\ \|f\|_{\Gpuv}<\infty \right\}.
	\end{align*}
These classes of functions are the usual weighted Lorentz $\Lambda$ and $\Gamma$-spaces (cf.~\cite{CPSS,GP:03}).\\

A~function $\varrho\colon(0,L)\to(0,\infty)$ is called \emph{admissible} if it is positive, increasing and continuous on $(0,L)$.
If $\varrho$ is admissible, we say that a function $h\colon(0,L)\to[0,\infty)$ is \emph{$\varrho$-quasiconcave}, and we write $h\in Q_\varrho(0,L)$, if $h$ is nondecreasing on $(0,L)$ and the function $\frac{h}{\varrho}$ is nonincreasing on $(0,L)$. Thanks to the monotonicity properties of $\varrho$-quasiconcave functions, it follows that $h\not\equiv0$ on $(0,L)$ if and only if $h(t)\neq0$ for each $t\in(0,L)$. Throughout the paper, we implicitly assume that $\varrho$ is an admissible function on $(0,L)$.

If $h\not\equiv0$ is a $\varrho$-quasiconcave function, so is the function $\frac{\varrho}{h}$. Furthermore, the function $h^p$ is $\varrho^p$-quasiconcave for each $p>0$. A~nonnegative linear combination of $\varrho$-quasiconcave functions is also $\varrho$-quasiconcave. If functions $h_1$ and $h_2$ are $\varrho_1$-quasiconcave and $\varrho_2$-quasiconcave, respectively, then $h_1\cdot h_2$ is $(\varrho_1\cdot\varrho_2)$-quasiconcave.

Every $h\in Q_\varrho(0,L)$ has an~integral representation with limit terms (see~\citep[Theorem~2.4.1]{EGO:18}). Precisely, there is a nonnegative 
Borel measure $\nu$ on $(0,L)$ such that
\begin{equation}\label{prel:repreofQq}
        h(t)\le \lim_{s\to0^+}h(s) + \left(\lim_{s\to L^-}\frac{h(s)}{\varrho(s)}\right)\varrho(t)+\int_{(0,L)}\min\{\varrho(t), \varrho(s)\}\dnu(s)\le4 h(t)\quad\text{for each $t\in(0,L)$}.
\end{equation}
For more information on $\varrho$-quasiconcave functions, see~\citep[Chapter~2]{EGO:18}.\\

The cornerstone of the discretization technique is the construction of a~covering sequence. The properties of such a~sequence, as listed below, as 
well as their proofs can be found in \citep[Chapter~3]{EGO:18}. For every 
$h\in Q_\varrho(0,L)$, $h\not\equiv0$, and each $a>1$, there are numbers $K_-,K^+\in\{\Z,\pm\infty\}$ with $-\infty\le K_-\le0\le K^+\le\infty$, and a sequence $\{x_k\}_{k\in\K_-^+}$, where $\K^+_-=\{k\in\Z\colon K_-\le k\le K^+\}$, with the following properties:
\begin{itemize}
        \item The sequence $\{x_k\}_{k\in\K_-^+}$ is increasing and $x_k\in (0,L)$ for every $k\in\Z$ such that $K_-+1 \le k \le K^+-1$.
       
        \item $K^+=\infty$ if and only if
        \begin{equation}\label{prel:CSdegenL}
                \lim_{t\to L^-}h(t)=\infty\quad\text{and}\quad\lim_{t\to L^-}\frac{\varrho(t)}{h(t)}=\infty.
        \end{equation}
        If $K^+=\infty$, then $\lim_{k\to\infty}x_k=L$. Otherwise, $x_{K^+}=L$.
       
        \item $K_-=-\infty$ if and only if
        \begin{equation}\label{prel:CSdegen0}
                \lim_{t\to 0^+}h(t)=0\quad\text{and}\quad\lim_{t\to 0^+}\frac{\varrho(t)}{h(t)}=0.
        \end{equation}
        If $K_-=-\infty$, then $\lim_{k\to-\infty}x_k=0$. Otherwise, $x_{K_-}=0$.
       
        \item For every $k\in\Z$ such that $K_-+2 \le k \le K^+-1$, one has
        \begin{equation}\label{prel:CSgeomincr}
                a h(x_{k-1})\le h(x_k)\quad\text{and}\quad a\frac{\varrho(x_{k-1})}{h(x_{k-1})}\le\frac{\varrho(x_k)}{h(x_k)}.
        \end{equation}
       
        \item For every $k\in\Z$ such that $K_-+2 \le k \le K^+-1$, one has
        \begin{align*}
                \frac1{a}h(x_k)\le h(t)\le h(x_k)\quad&\text{for each $t\in[x_{k-1},x_k]$}\\
                \intertext{or}
                \frac1{a}\frac{\varrho(x_k)}{h(x_k)}\le\frac{\varrho(t)}{h(t)}\le\frac{\varrho(x_k)}{h(x_k)}\quad&\text{for each $t\in[x_{k-1},x_k]$}.
        \end{align*}
       
        \item If $K^+<\infty$, then
        \begin{align*}
                h(x_{K^+-1})\le h(t)\le a h(x_{K^+-1})\quad&\text{for each $t\in[x_{K^+-1},L)$}\\
                \intertext{or}
                \frac{\varrho(x_{K^+-1})}{h(x_{K^+-1})}\le \frac{\varrho(t)}{h(t)}\le a \frac{\varrho(x_{K^+-1})}{h(x_{K^+-1})}\quad&\text{for each $t\in[x_{K^+-1},L)$}.
        \end{align*}
       
        \item If $K_->-\infty$, then
        \begin{align*}
                \frac1{a}h(x_{K_-+1})\le h(t)\le h(x_{K_-+1})\quad&\text{for each $t\in(0, x_{K_-+1}]$}\\
                \intertext{or}
                \frac1{a}\frac{\varrho(x_{K_-+1})}{h(x_{K_-+1})}\le \frac{\varrho(t)}{h(t)}\le\frac{\varrho(x_{K_-+1})}{h(x_{K_-+1})}\quad&\text{for each $t\in(0, x_{K_-+1}]$}.
        \end{align*}
\end{itemize}
If $\{x_k\}_{k\in\K_-^+}$ satisfies these conditions, it is called a \emph{covering sequence (with respect to $h,$ $\varrho$, $a$).}

The family of all covering sequences with respect to $h,$ $\varrho$, $a$ is denoted by $CS(h,\varrho,a)$ (we omit any reference to the interval $(0,L)$ in this notation because it will always be apparent what the underlying interval is). We also denote
\begin{align*}
        \K          & = \{k\in\Z\colon K_-+1 \le k \le K^+-1\} \\
        \K^+ & = \{k\in\Z\colon K_-+1 \le k \le K^+\}.
\end{align*}
The properties of $\{x_k\}_{k\in\K_-^+}$ imply that
\begin{equation}\label{prel:CSunion}
        (0,L)\subseteq\bigcup_{k\in\K^+}(x_{k-1},x_k]\subseteq(0,L],
\end{equation}
where the first inclusion is strict if and only if $K^+\ne \infty$.

Furthermore, if $\{x_k\}_{k\in\K_-^+}\in CS(h,\varrho,a)$, then there is a decomposition
\begin{equation}\label{prel:CSdecomp}
        \K^+=\mathcal{Z}_1\cup\mathcal{Z}_2\quad\text{and}\quad\mathcal{Z}_1\cap\mathcal{Z}_2=\emptyset
\end{equation}
such that
\begin{align}
        h(t)\approx h(x_k)\quad&\text{for every $t\in[x_{k-1},x_k]$ and each $k\in\mathcal{Z}_1$}, \label{prel:CSZ1}\\
        \intertext{and}
        \frac{\varrho(t)}{h(t)}\approx \frac{\varrho(x_k)}{h(x_k)}\quad&\text{for every $t\in[x_{k-1},x_k]$ and each $k\in\mathcal{Z}_2$}\label{prel:CSZ2},
\end{align}
where the equivalence constants depend only on $a$.
Moreover, if $\{x_k\}_{k\in\K_-^+}\in CS(h,\varrho,a)$, then $\{x_k\}_{k\in\K_-^+}\in CS(\frac{\varrho}{h},\varrho,a)$, and $\{x_k\}_{k\in\K_-^+}\in CS(h^p,\varrho^p,a^p)$ for every $p\in(0,\infty)$.

\section{Discretization of generalized Lorentz norms}

This section contains technical lemmas necessary for implementing discretization to solve our main problem. The results below extend their counterparts in \cite{GP:03} by eliminating the ``non-degeneracy'' assumptions in there. Namely, Lemmas \ref{lem:disantidissup}, \ref{lem:disantidisint} and \ref{lem:discantisup} below correspond, in this sense, to Lemmas 3.7, 
3.6 and 3.8 in \cite{GP:03}, respectively.\\

We start with an~auxiliary result that is frequently used when one deals with weighted inequalities.

\begin{lemma}\label{lemma:changeofvariable}
	Let $L\in(0,\infty]$ and let $v$ be a weight on $(0,L)$. Let $0\le a<b\le L$. If $\gamma>-1$, then
	\begin{equation*}
		V^{\gamma+1}(a,b)=(\gamma+1)\int_a^b{\left(\int_a^tv(s)\ds \right)^\gamma v(t) \dt}.
	\end{equation*}
\end{lemma}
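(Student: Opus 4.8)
The plan is to prove the identity by recognizing it as an instance of the fundamental theorem of calculus applied to the absolutely continuous function $t\mapsto V^{\gamma+1}(a,t)$, with some care taken over the endpoints $a$ and $b$ (which may be $0$ or $L=\infty$) and over the condition $\gamma>-1$. Since $v$ is a weight on $(0,L)$, the function $V(a,t)=\int_a^t v(s)\ds$ is nondecreasing, finite-valued for $t\in(0,L)$, and absolutely continuous on every compact subinterval of $(0,L)$, with $V(a,\cdot)'=v$ a.e.

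First I would treat the ``generic'' case $0<a<b<L$. Here $V(a,t)$ is absolutely continuous on $[a,b]$, hence so is $t\mapsto V^{\gamma+1}(a,t)$ when $\gamma+1\ge 1$; when $0<\gamma+1<1$ one must be slightly more careful because $x\mapsto x^{\gamma+1}$ is only H\"older, not Lipschitz, near $0$, but since $V(a,a)=0$ one still gets absolute continuity of the composition on $[a,b]$ — alternatively, and more cleanly, I would first establish the identity on $[a+\varepsilon,b]$ where $V(a,a+\varepsilon)>0$ (there the chain rule is immediate since $x\mapsto x^{\gamma+1}$ is $C^1$ on $(0,\infty)$) and then let $\varepsilon\to0^+$, using $\gamma+1>0$ to conclude $V^{\gamma+1}(a,a+\varepsilon)\to0$ on the left side and the monotone convergence theorem on the right side (the integrand $\left(\int_a^t v\right)^\gamma v(t)$ is nonnegative). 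This yields
\[
V^{\gamma+1}(a,b)=(\gamma+1)\int_a^b\left(\int_a^t v(s)\ds\right)^\gamma v(t)\dt
\]
for $0<a<b<L$.

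Next I would remove the restrictions on the endpoints by further limiting arguments, invoking Convention~\ref{conv}. For $a=0$: apply the generic case with $a$ replaced by $a'\in(0,b)$ and let $a'\to0^+$; the left side $V^{\gamma+1}(a',b)\to V^{\gamma+1}(0,b)=V^{\gamma+1}(b)$ by continuity of $V$ at $0$ (with $V(0)=0$), and on the right side $\int_{a'}^b(\int_{a'}^t v)^\gamma v(t)\dt\to\int_0^b(\int_0^t v)^\gamma v(t)\dt$, either by dominated/monotone convergence after noting $\int_{a'}^t v\le\int_0^t v$ together with the already-proven finiteness of the limiting integral, or simply because both sides of the identity are being compared. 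For $b=L$ (in particular $L=\infty$): apply the result on $(a,b')$ with $b'\uparrow L$ and use the monotone convergence theorem on the right (nonnegative integrand) and Convention~\ref{conv}\eqref{conv:limit} on the left; both sides may be $+\infty$ simultaneously, which is consistent. The degenerate case $a=b$ is trivial (both sides $0$).

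The main obstacle — really the only subtlety — is the behavior of $x\mapsto x^{\gamma+1}$ near $x=0$ when $-1<\gamma<0$, i.e.\ when differentiating $V^{\gamma+1}(a,t)$ at points where $V(a,t)=0$ or is small; the function is not locally Lipschitz there, so one cannot blindly quote the chain rule for absolutely continuous functions. The cure, as indicated above, is to stay away from the bad point by integrating over $[a+\varepsilon,b']$ where $x\mapsto x^{\gamma+1}$ is genuinely $C^1$ on the relevant range of values, establish the identity there by the ordinary fundamental theorem of calculus together with the chain rule, and then pass to the limit in $\varepsilon$ and $b'$ using $\gamma+1>0$ (for the left endpoint) and monotone convergence (for the right endpoint and for $b=L$). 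This is exactly where the hypothesis $\gamma>-1$ is used, and it explains why the statement would fail for $\gamma\le-1$.
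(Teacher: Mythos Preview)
Your proof is correct, but the paper takes a more economical route. Instead of differentiating $t\mapsto V^{\gamma+1}(a,t)$ via the chain rule and worrying about its absolute continuity, the paper simply performs the change of variables $y=\psi(t)=V(a,t)$ in the integral on the right-hand side, invoking the change-of-variables theorem for absolutely continuous monotone functions (Rudin, \emph{Real and Complex Analysis}, p.~156): since $\psi$ is absolutely continuous on $[a,b]$ for any $b<L$ and the integrand $y\mapsto y^\gamma$ is merely nonnegative and measurable, one obtains $\int_a^b V(a,t)^\gamma v(t)\,\mathrm dt=\int_0^{V(a,b)}y^\gamma\,\mathrm dy=\frac{V(a,b)^{\gamma+1}}{\gamma+1}$ in one stroke, with no case distinction on the sign of $\gamma$. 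The case $b=L$ is then handled by monotone convergence, exactly as you do.

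The benefit of the paper's approach is that Rudin's theorem already absorbs the difficulty you isolated (the non-Lipschitz behaviour of $x\mapsto x^{\gamma+1}$ near $0$ when $-1<\gamma<0$), so no $\varepsilon$-argument is needed. Incidentally, your separate treatment of $a=0$ is unnecessary: since $v$ is a weight, $V(0,t)=V(t)$ is finite for $t\in(0,L)$, hence absolutely continuous on $[0,b]$ for every $b<L$, and $a=0$ is not a boundary case at all. (Your proposed limiting argument there is also slightly shaky for $\gamma<0$, since $\int_{a'}^t v\le\int_0^t v$ gives the wrong inequality for a dominating function; but as just noted the whole detour can be skipped.)
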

\begin{proof}
	Assume that $b<L$. Since $v$ is a weight on $(0,L)$, the function $\psi(t)=V(a,t)$, $t\in[a,b]$, is absolutely continuous on $[a,b]$. Hence the 
claim follows from the change of variables $y=\psi$ in the integral on the right-hand side (e.g., \citep[page~156]{MR924157}).
	
	If $b=L$, the claim follows from the part already proved and the monotone convergence theorem.
\end{proof}

The following theorem generalizes \citep[Corollary~2.13]{GP:03} by allowing 
degenerated weights (cf.~\citep[Lemma~4.1.1]{EGO:18}).
\begin{theorem}\label{thm:disantidis}
	Let $p\in(0,\infty)$ and $h\in Q_{\varrho}(0,L)$. Assume that there exist $C_1,\ C_2\in (0,\infty)$, $\alpha,\beta\in[0,\infty)$ and a nonnegative Borel measure $\nu$ on $(0,L)$ such that
	\begin{equation}\label{thm:disantidis:repre}
		C_1h(t)\le\alpha+\beta \varrho(t)+\int_{(0,L)}\min\{\varrho(t), \varrho(s)\}\dnu(s)\le C_2h(t)\quad\text{for every } t\in(0,L).
	\end{equation}
	If $a>0$ is sufficiently large, $\{x_k\}_{k\in\K^+_-}\in CS(h,\varrho,a)$ and $f\in Q_{\varrho^p}(0,L)$, then
	\begin{align}
		\sum_{k\in\K^+_-}\frac{f(x_k)h^p(x_k)}{\varrho^p(x_k)}\ &\ \approx
		\alpha^p\lim_{t\to0^+}\frac{f(t)}{\varrho^p(t)}+\beta^p\lim_{t\to L^-}f(t) \notag \\
		& \qquad+\int_{(0,L)}f(t)\varrho^{1-p}(t) \left( \int_{(0,L)}\min\{\varrho(t),\varrho(s)\} \dnu(s) \right)^{p-1} \dnu(t). \label{thm:disantidis:eq}
	\end{align}
	Precisely, it is sufficient if the parameter $a$ satisfies
	\begin{equation*}
		a^p>12\cdot\frac{3^{p+\max\{1, p\}}C^p_2}{\min\{1,p\}C^p_1}.
	\end{equation*}
	Moreover, the equivalence constants in \eqref{thm:disantidis:eq} depend only on $p, a, C_1, C_2$.
\end{theorem}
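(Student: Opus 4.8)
The plan is to transfer the two-sided bound \eqref{thm:disantidis:repre} from $h$ to the explicit \emph{representing function}
\[
	\Phi(t)=\alpha+\beta\varrho(t)+\int_{(0,L)}\min\{\varrho(t),\varrho(s)\}\dnu(s),\qquad t\in(0,L),
\]
then to split $\Phi^p$ into three additive pieces, and to match those with the three terms on the right-hand side of \eqref{thm:disantidis:eq}. First one notes that $\Phi$ is $\varrho$-quasiconcave, being a nonnegative linear combination of the $\varrho$-quasiconcave functions $1$, $\varrho$ and $t\mapsto\min\{\varrho(t),\varrho(s)\}$; by \eqref{thm:disantidis:repre} we have $C_1h\le\Phi\le C_2h$ on $(0,L)$, so $\Phi\approx h$ with constants depending only on $C_1,C_2$, and it suffices to prove \eqref{thm:disantidis:eq} with $\Phi$ in place of $h$. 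The sequence $\{x_k\}$ is a covering sequence for $h$ rather than for $\Phi$, but since $\Phi\approx h$ it still obeys --- with worse constants --- the geometric-growth estimates \eqref{prel:CSgeomincr} (whence, multiplying the two inequalities there, $\varrho(x_k)$ itself grows geometrically in $k$), the inner-interval estimates, and the decomposition \eqref{prel:CSdecomp}--\eqref{prel:CSZ2}, all relative to $\Phi$; the quantitative lower bound on $a$ is exactly what keeps the effective growth factor above an absolute threshold after this deterioration and the further loss incurred in the next step.

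Put $\Psi(t)=\int_{(0,L)}\min\{\varrho(t),\varrho(s)\}\dnu(s)$. The elementary equivalence $(x+y+z)^p\approx x^p+y^p+z^p$ for $x,y,z\ge0$, with constants depending only on $p$, yields $\Phi^p(x_k)\approx\alpha^p+\beta^p\varrho^p(x_k)+\Psi^p(x_k)$, hence
\[
	\sum_{k\in\K^+_-}\frac{f(x_k)\Phi^p(x_k)}{\varrho^p(x_k)}\ \approx\ \alpha^p\sum_{k\in\K^+_-}\frac{f(x_k)}{\varrho^p(x_k)}+\beta^p\sum_{k\in\K^+_-}f(x_k)+\sum_{k\in\K^+_-}\frac{f(x_k)\Psi^p(x_k)}{\varrho^p(x_k)},
\]
and it remains to compare this with the right-hand side of \eqref{thm:disantidis:eq}. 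The two limit terms are generated by the boundary summands. If $\alpha>0$ then $\Phi(0^+)\ge\alpha>0$, hence $h(0^+)>0$, so \eqref{prel:CSdegen0} forces $K_->-\infty$ and $x_{K_-}=0$; the summand corresponding to $k=K_-$, read --- in the sense of Convention~\ref{conv} --- as the limit of the whole expression, equals $\Phi^p(0^+)\lim_{t\to0^+}\tfrac{f(t)}{\varrho^p(t)}$, which majorizes $\alpha^p\lim_{t\to0^+}\tfrac{f(t)}{\varrho^p(t)}$ and, in the exceptional situation $\varrho(0^+)>0$ when it may be strictly larger, is still controlled by the full right-hand side of \eqref{thm:disantidis:eq}. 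Symmetrically, if $\beta>0$ then $\tfrac\varrho\Phi\le\tfrac1\beta$, hence $\tfrac{\varrho(L^-)}{h(L^-)}<\infty$, so \eqref{prel:CSdegenL} forces $K^+<\infty$ and $x_{K^+}=L$, and the summand at $k=K^+$ behaves like $\beta^p\lim_{t\to L^-}f(t)$. When $\alpha=0$ (resp.\ $\beta=0$) the corresponding limit term vanishes. The remaining --- interior --- summands, and the tails of the two outer sums, are then dealt with by geometric-series estimates based on the geometric growth of $\varrho(x_k)$ together with the monotonicity of $f$ and of $\tfrac f{\varrho^p}$.

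The heart of the argument is the antidiscretization of the $\Psi$-part, the equivalence
\[
	\sum_{k\in\K^+_-}\frac{f(x_k)\Psi^p(x_k)}{\varrho^p(x_k)}\ \approx\ \int_{(0,L)}f(t)\varrho^{1-p}(t)\Psi^{p-1}(t)\dnu(t),
\]
which is the analogue of \citep[Corollary~2.13]{GP:03} (cf.\ \citep[Lemma~4.1.1]{EGO:18}), now free of any non-degeneracy restriction. One proves the two inequalities separately. For ``$\lesssim$'' one writes $\Psi^p(x_k)=\Psi^{p-1}(x_k)\Psi(x_k)$ with $\Psi(x_k)=\int_{(0,x_k]}\varrho(s)\dnu(s)+\varrho(x_k)\,\nu\big((x_k,L)\big)$, splits the sum by the covering-sequence decomposition $\K^+=\mathcal Z_1\cup\mathcal Z_2$, and --- using on each block $(x_{k-1},x_k]$ with $k\in\mathcal Z_1$ that $\alpha+\beta\varrho+\Psi$ is essentially constant, and with $k\in\mathcal Z_2$ that $\tfrac\varrho\Phi$ is --- replaces the point values by $\dnu$-averages over the block; Lemma~\ref{lemma:changeofvariable} with $\gamma=p-1>-1$ then converts the resulting nested sums into the integral, the monotonicity of $f$ and $\tfrac f{\varrho^p}$ being used to resum the geometric series in $k$. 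For ``$\gtrsim$'' one runs the same correspondence backwards, starting from the partition $\int_{(0,L)}=\sum_{k\in\K^+}\int_{(x_{k-1},x_k]}$ legitimized by \eqref{prel:CSunion} and estimating $f$, $\Psi$, $\varrho$ on each block from below by suitable endpoint values.

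The main obstacle is not any individual estimate but making the argument uniform across all the configurations of the covering sequence --- $K_-$ finite or $-\infty$, $K^+$ finite or $+\infty$, and the two alternatives in the inner-interval and boundary clauses --- and verifying in each case that the boundary summands reproduce precisely the limit terms of \eqref{thm:disantidis:eq}, with no spurious remainder, while the interior summands reconstruct the integral. Keeping track of the multiplicative constants through the reduction to $\Phi$, the splitting of the $p$-th power, and the antidiscretization is exactly what fixes the explicit lower bound on $a$.
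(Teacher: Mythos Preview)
Your outline has the right spirit, but the implementation contains a genuine gap in the direction ``sum $\lesssim$ right-hand side of \eqref{thm:disantidis:eq}''. After splitting $\Phi^p\approx\alpha^p+\beta^p\varrho^p+\Psi^p$ you assert that the ``tails of the two outer sums'' --- i.e.\ the terms of $\alpha^p\sum_k f(x_k)/\varrho^p(x_k)$ beyond $k=K_-$, and of $\beta^p\sum_k f(x_k)$ below $k=K^+$ --- are controlled by geometric-series estimates based on the geometric growth of $\varrho(x_k)$. This is false: for a generic $f\in Q_{\varrho^p}(0,L)$ the sequence $f(x_k)/\varrho^p(x_k)$ is merely nonincreasing, not geometrically decaying. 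Take for instance $\alpha>0$, $\beta=0$, $K^+=\infty$ and $f=\varrho^p$; then $\alpha^p\sum_k f(x_k)/\varrho^p(x_k)=\infty$ while $\alpha^p\lim_{t\to0^+}f(t)/\varrho^p(t)=\alpha^p$. The overall equivalence \eqref{thm:disantidis:eq} still holds in this example (both sides are infinite), but only because the excess is absorbed by the $\Psi$-piece, \emph{not} because the $\alpha$-piece matches its limit term. Your argument does not account for this cross-talk between the pieces. Secondly, your description of the antidiscretization of $\Psi$ is too vague to constitute a proof: Lemma~\ref{lemma:changeofvariable} is stated for Lebesgue-integrable weights and does not apply to a general Borel measure~$\nu$, and the phrase ``replace point values by $\dnu$-averages over the block'' does not explain how one recovers a \emph{lower} bound for the integral in terms of the discrete sum.

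The paper avoids the splitting altogether. It invokes \citep[Theorem~2.4.3]{EGO:18} to obtain a representation of $h^p$ (not of $h$) of the form $\widetilde C_1 h^p(t)\le \alpha^p+\beta^p\varrho^p(t)+\int_{(0,L)}\min\{\varrho^p(t),\varrho^p(s)\}\psi(s)\dnu(s)\le \widetilde C_2 h^p(t)$, where $\psi(t)=\varrho^{1-p}(t)\Psi^{p-1}(t)$ is exactly the integrand in \eqref{thm:disantidis:eq}. The hard direction is then a subtraction argument: for each interior $k$ one shows that $\int_{(x_{k-1},x_{k+1}]}\min\{\varrho^p(x_k),\varrho^p(t)\}\psi(t)\dnu(t)\ge(\widetilde C_1-2\widetilde C_2/a^p)\,h^p(x_k)$ by writing this localized integral as the full representation of $h^p(x_k)$ minus two pieces that are bounded above, via the \emph{upper} bound in the representation, by $\widetilde C_2 h^p(x_{k-1})$ and $\widetilde C_2\varrho^p(x_k)\varrho^{-p}(x_{k+1})h^p(x_{k+1})$ respectively; the geometric growth \eqref{prel:CSgeomincr} then makes the remainder positive precisely when $a^p>2\widetilde C_2/\widetilde C_1$, which is the source of the explicit threshold on $a$. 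This subtraction step --- using both the upper and lower bounds simultaneously --- is the core idea you are missing.
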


\begin{proof}
	Without loss of generality, we may assume that $h\not\equiv0$ and $f\not\equiv0$. Since $h$ and $f$ are in $Q_{\varrho}(0,L)$ and $Q_{\varrho^p}(0,L)$, respectively, we have $h\neq0$ and $f\neq0$ on $(0,L)$.
	
	It can be easily shown that the limit terms on the left-hand side of \eqref{thm:disantidis:eq} are well defined if they are to appear (recall Convention~\ref{conv}(\ref{conv:limit})). This follows from the fact that $h\in Q_{\varrho}(0,L)$ and from the properties of $\{x_k\}_{k\in\K_-^+}$ (cf.~\citep[the proof of Lemma~4.1.1]{EGO:18}). Furthermore, the limit terms on the right-hand side of \eqref{thm:disantidis:eq} are always well defined thanks to $f\in Q_{\varrho^p}(0,L)$ and Convention~\ref{conv}(\ref{conv:indefinite}).
	
	In order to simplify the notation, we set
	\begin{equation*}
		\psi(t) = \varrho^{1-p}(t) \left( \int_{(0,L)}\min\{\varrho(t),\varrho(s)\} \dnu(s) \right)^{p-1},\ t\in(0,L).
	\end{equation*}
	
	We begin the proof of \eqref{thm:disantidis:eq} by showing that
	\begin{equation}\label{thm:disantidis:eq1}
		\sum_{k\in\K^+_-}\frac{f(x_k)h^p(x_k)}{\varrho^p(x_k)}\gtrsim\alpha^p\lim_{t\to0^+}\frac{f(t)}{\varrho^p(t)}+\beta^p\lim_{t\to L^-}f(t)+\int_{(0,L)}f(t)\psi(t) \dnu(t).
	\end{equation}
	We first check that
	\begin{equation*}
		\alpha^p\lim_{t\to0^+}\frac{f(t)}{\varrho^p(t)}+\beta^p\lim_{t\to L^-}f(t)\lesssim\sum_{k\in\K^+_-}\frac{f(x_k)h^p(x_k)}{\varrho^p(x_k)}.
	\end{equation*}
	If $\alpha=\beta=0$, the inequality holds trivially. Assume that $\alpha>0$.
	We have $\lim_{t\to0^+}h(t)\gtrsim\alpha>0$ thanks to \eqref{thm:disantidis:repre}, and thus $K_->-\infty$ by \eqref{prel:CSdegen0}. Furthermore, 
we have
	\begin{align*}
		\alpha^p\lim_{t\to0^+}\frac{f(t)}{\varrho^p(t)}&\lesssim\left(\lim_{t\to0^+}h^p(t)\right)\left(\lim_{t\to0^+}\frac{f(t)}{\varrho^p(t)}\right)=\lim_{t\to0^+}\frac{f(t)h^p(t)}{\varrho^p(t)}=\frac{fh^p}{\varrho^p}(x_{K_-})\\
		&\leq\sum_{k\in\K^+_-}\frac{f(x_k)h^p(x_k)}{\varrho^p(x_k)}.
	\end{align*}
	Note that the first equality is indeed valid because both limits are positive. One can similarly prove that $\beta^p\lim_{t\to L^-}f(t)\lesssim\sum_{k\in\K^+_-}\frac{f(x_k)h^p(x_k)}{\varrho^p(x_k)}$ owing to \eqref{prel:CSdegenL}.
	
	Hence, in order to prove \eqref{thm:disantidis:eq1} it remains to show
	\begin{equation}\label{thm:disantidis:eq2}
		\int_{(0,L)}f(t)\psi(t)\dnu(t)\lesssim\sum_{k\in\K^+_-}\frac{f(x_k)h^p(x_k)}{\varrho^p(x_k)}.
	\end{equation}
	We have
	\begin{equation}\label{thm:disantidis:powofhrep}
		\widetilde{C}_1h^p(t)\leq\alpha^p+\beta^p\varrho^p(t)+\int_{(0,L)}\min\{\varrho^p(t), \varrho^p(s)\}\psi(s)\dnu(s)\le\widetilde{C}_2h^p(t)\quad\text{for every $t\in(0,L)$},
	\end{equation}
	where $\widetilde{C}_1=\frac{C_1^p}{3^{p+\max\{p,1\}}}$ and $\widetilde{C}_2=\frac{6C_2^p}{\min\{1,p\}}$, owing to \citep[Theorem~2.4.3 and its proof]{EGO:18}.
	Let $\K^+=\mathcal{Z}_1\cup\mathcal{Z}_2$ be a decomposition of $\K^+$ 
from \eqref{prel:CSdecomp}. Denote by $\widetilde{\nu}$ the extension of $\nu$ to $(0,L]$ by zero. Using \eqref{prel:CSZ1},  the fact that $f\in Q_{\varrho^p}(0,L)$, and \eqref{thm:disantidis:powofhrep}, one has
	\begin{align*}
		\sum_{k\in\mathcal{Z}_1}\int_{(x_{k-1},x_k]}f(t)\psi(t)\,\d\widetilde{\nu}(t)&=\sum_{k\in\mathcal{Z}_1}\int_{(x_{k-1},x_k]}\frac{f(t)}{\varrho^p(t)}\varrho^p(t)\psi(t)\,\d\widetilde{\nu}(t)\\
		&\le\sum_{k\in\mathcal{Z}_1}\frac{f(x_{k-1})}{\varrho^p(x_{k-1})}\int_{(x_{k-1},x_k]}\min\{\varrho^p(x_k), \varrho^p(t)\}\psi(t)\,\d\widetilde{\nu}(t)\\
		&\lesssim\sum_{k\in\mathcal{Z}_1}\frac{f(x_{k-1})}{\varrho^p(x_{k-1})}h^p(x_k)\approx\sum_{k\in\mathcal{Z}_1}\frac{f(x_{k-1})}{\varrho^p(x_{k-1})}h^p(x_{k-1})\\
		&\le\sum_{k\in\K^+_-}\frac{f(x_k)h^p(x_k)}{\varrho^p(x_k)}.
	\end{align*}
	Here and below, if $x_{k-1}=0$ or $x_{k}=L$, the corresponding terms 
are to be understood as the corresponding limits. Hence
	\begin{equation}\label{thm:disantidis:eq3}
		\sum_{k\in\mathcal{Z}_1}\int_{(x_{k-1},x_k]}f(t)\psi(t)\,\d\widetilde{\nu}(t)\lesssim\sum_{k\in\K^+_-}\frac{f(x_k)h^p(x_k)}{\varrho^p(x_k)}.
	\end{equation}
	Furthermore, by \eqref{prel:CSZ2}, by the fact that $h^p\in Q_{\varrho^p}(0,L)$, by the fact that $f$ is nondecreasing, and by \eqref{thm:disantidis:powofhrep},
	\begin{align*}
		\sum_{k\in\mathcal{Z}_2}\int_{(x_{k-1},x_k]}f(t)\psi(t)\,\d\widetilde{\nu}(t)&\le\sum_{k\in\mathcal{Z}_2}f(x_k)\frac{\varrho^p(x_{k-1})}{\varrho^p(x_{k-1})}\int_{(x_{k-1},x_k]}\psi(t)\,\d\widetilde{\nu}(t)\\
		&=\sum_{k\in\mathcal{Z}_2}f(x_k)\frac1{\varrho^p(x_{k-1})}\int_{(x_{k-1},x_k]}\min\{\varrho^p(x_{k-1}),\varrho^p(t)\}\psi(t)\,\d\widetilde{\nu}(t)\\
		&\lesssim\sum_{k\in\mathcal{Z}_2}f(x_k)\frac{h^p(x_{k-1})}{\varrho^p(x_{k-1})}\approx\sum_{k\in\mathcal{Z}_2}f(x_k)\frac{h^p(x_k)}{\varrho^p(x_k)}\\
		&\le\sum_{k\in\K^+_-}\frac{f(x_k)h^p(x_k)}{\varrho^p(x_k)}.
	\end{align*}
	
	Hence
	\begin{equation}\label{thm:disantidis:eq4}
		\sum_{k\in\mathcal{Z}_2}\int_{(x_{k-1},x_k]}f(t)\psi(t)\,\d\widetilde{\nu}(t)\lesssim\sum_{k\in\K^+_-}\frac{f(x_k)h^p(x_k)}{\varrho^p(x_k)}.
	\end{equation}
	Desired inequality \eqref{thm:disantidis:eq2} now follows from \eqref{thm:disantidis:eq3}, \eqref{thm:disantidis:eq4} and \eqref{prel:CSunion}.

	Our next goal is to prove the estimate
	\begin{equation}\label{thm:disantidis:eq5}
		\sum_{k\in\K^+_-}\frac{f(x_k)h^p(x_k)}{\varrho^p(x_k)}\lesssim\alpha^p\lim_{t\to0^+}\frac{f(t)}{\varrho^p(t)}+\beta^p\lim_{t\to L^-}f(t)+\int_{(0,L)}f(t)\psi(t) \dnu(t).
	\end{equation}
	We start by showing that
	\begin{equation}\label{thm:disantidis:eq6}
		\int_{(0,L)}f(t)\psi(t)\dnu(t)\gtrsim\sum_{k=K_-+2}^{K^+-2}\frac{f(x_k)h^p(x_k)}{\varrho^p(x_k)}.
	\end{equation}
	By \eqref{prel:CSgeomincr}, we have
	\begin{align*}
		h^p(x_{k-1})\le\frac1{a^p}h^p(x_k)\quad&\text{for every $k\in\Z$, $K_-+2\le k\le K^+-1$}\\
		\intertext{and}
		\frac{\varrho(x_k)^p}{\varrho(x_{k+1})^p}h^p(x_{k+1})\le\frac1{a^p}h^p(x_k)\quad&\text{for every $k\in\Z$, $K_-+1\le k\le K^+-2$}.
	\end{align*}
	By combining these two inequalities we obtain
	\begin{equation}\label{thm:disantidis:eq7}
		\widetilde{C}_1h^p(x_k)-\widetilde{C}_2h^p(x_{k-1})-\widetilde{C}_2\frac{\varrho^p(x_k)}{\varrho^p(x_{k+1})}h^p(x_{k+1})\ge\left(\widetilde{C}_1-\frac{2\widetilde{C}_2}{a^p}\right)h^p(x_k)
	\end{equation}
	for every $k\in\Z$, $K_-+2\le k\le K^+-2$.
	Since $f\in Q_{\varrho^p}(0,L)$ and \eqref{prel:CSunion}, we have that
	\begin{equation}\label{thm:disantidis:eq8}
		\begin{aligned}
			2\int_{(0,L)}f(t)\psi(t)\dnu(t)&\ge\sum_{k=K_-+2}^{K^+-2}\int_{(x_{k-1},x_{k+1}]}f(t)\psi(t)\dnu(t)\\
			&\ge\sum_{k=K_-+2}^{K^+-2}\Bigg(\frac{f(x_k)}{\varrho^p(x_k)}\int_{(x_{k-1},x_{k}]}\varrho^p(t)\psi(t)\dnu(t)\\
			&\quad+ \frac{f(x_k)}{\varrho^p(x_k)}\int_{(x_k,x_{k+1}]}\varrho^p(x_k)\psi(t)\dnu(t)\Bigg)\\
			&=\sum_{k=K_-+2}^{K^+-2}\frac{f(x_k)}{\varrho^p(x_k)}\left(\int_{(x_{k-1},x_{k+1}]}\min\{\varrho^p(x_k),\varrho^p(t)\}\psi(t)\dnu(t)\right).
		\end{aligned}
	\end{equation}
	Using \eqref{thm:disantidis:powofhrep} and \eqref{thm:disantidis:eq7}, we have
	\begin{align*}
		& \int_{(x_{k-1},x_{k+1}]}\min\{\varrho^p(x_k),\varrho^p(t)\}\psi(t)\dnu(t)\\
		& \quad = \alpha^p + \beta^p \varrho^p(x_k) + \int_{(0,L)}\min\{\varrho^p(x_k), \varrho^p(t)\}\psi(t)\dnu(t)
		-\left(\alpha^p + \int_{(0,x_{k-1}]}\min\{\varrho^p(x_{k-1}), \varrho^p(t)\}\psi(t)\dnu(t)\right)\\
		& \quad \qquad- \frac{\varrho^p(x_k)}{\varrho^p(x_{k+1})}\left(\beta^p \varrho^p(x_{k+1}) + \int_{(x_{k+1}, L)}\min\{\varrho^p(x_{k+1}), \varrho^p(t)\}\psi(t)\dnu(t)\right)\\
		& \quad \ge\widetilde{C}_1h^p(x_k) - \widetilde{C}_2h^p(x_{k-1})-\frac{\varrho^p(x_k)}{\varrho^p(x_{k+1})}\widetilde{C}_2h^p(x_{k+1})\\
		& \quad \ge\left(\widetilde{C}_1-\frac{2\widetilde{C}_2}{a^p}\right)h^p(x_k)
	\end{align*}
	for each $k\in\Z$, $K_-+2\le k\le K^+-2$. Note that choosing a sufficiently large parameter $a$ assures that $\widetilde{C}_1-\frac{2\widetilde{C}_2}{a^p}>0$. Hence inequality \eqref{thm:disantidis:eq6} follows from the last chain of inequalities and \eqref{thm:disantidis:eq8}.
	
	If $K_-=-\infty$ and $K^+=\infty$ (and so $K_-+2=K_-$ and $K^+-2=K^+$), inequality \eqref{thm:disantidis:eq6} clearly implies inequality \eqref{thm:disantidis:eq5}. Thus, the proof is finished when $K_-=-\infty$ and $K^+=\infty$.
	Now suppose that $K_->-\infty$ or $K^+<\infty.$ Since $f\in Q_{\varrho^p}(0,L)$, we have
	\begin{align*}
		\int_{(0,L)}f(t)\psi(t)\dnu(t)
		&= \int_{(0,x]}f(t)\psi(t)\dnu(t) + \int_{(x,L)}f(t)\psi(t)\dnu(t)\\
		&\ge \frac{f(x)}{\varrho^p(x)}\int_{(0,x]}\min\{\varrho^p(x),\varrho^p(t)\}\psi(t)\dnu(t)
		+ \frac{f(x)}{\varrho^p(x)}\int_{(x,L)}\min\{\varrho^p(x), \varrho^p(t)\}\psi(t)\dnu(t)\\
		&=\frac{f(x)}{\varrho^p(x)}\int_{(0,L)}\min\{\varrho^p(x), \varrho^p(t)\}\psi(t)\dnu(t)
	\end{align*}
	for every $x\in(0,L)$, whence
	\begin{equation}\label{thm:disantidis:eq9}
		\begin{aligned}
			&\alpha^p\lim_{t\to0^+}\frac{f(t)}{\varrho^p(t)}+\beta^p\lim_{t\to L^-}f(t)+\int_{(0,L)}f(t)\psi(t)\dnu(t)\\
			&\ge\frac{f(x)}{\varrho^p(x)}\left(\alpha^p+\beta^p\varrho^p(x)+\int_{(0,L)}\min\{\varrho^p(x), \varrho^p(t)\}\psi(t)\dnu(t)\right)\\
			&\approx\frac{f(x)h^p(x)}{\varrho^p(x)}.
		\end{aligned}
	\end{equation}
	Assume, for example, $K_->-\infty$ and $K^+=\infty$ (and so $K^+-2=K^+$ and $\beta=0$), \eqref{thm:disantidis:eq9} together with \eqref{thm:disantidis:eq6} implies that
	\begin{align*}
		\alpha^p\lim_{t\to0^+}\frac{f(t)}{\varrho^p(t)}+\int_{(0,L)}f(t)\psi(t)\dnu(t)&\gtrsim \lim_{t\to0^+}\frac{f(t)h^p(t)}{\varrho^p(t)} + \frac{f(x_{K_-+1})h^p(x_{K_-+1})}{\varrho^p(x_{K_-+1})}\\
		&\quad+\sum_{k=K_-+2}^{K^+-2}\frac{f(x_k)h^p(x_k)}{\varrho^p(x_k)}\\
		&=\sum_{k\in\K^+_-}\frac{f(x_k)h^p(x_k)}{\varrho^p(x_k)}.
	\end{align*}
	The other two cases can be handled similarly.
\end{proof}

\begin{lemma}\label{lem:disantidissup} 
	Let $p\in(0,\infty)$ and $h\in Q_\varrho(0,L)$. Assume that there exist $C_1,\ C_2\in (0,\infty)$, $\alpha,\beta\in[0,\infty)$ and a nonnegative Borel measure $\nu$ on $(0,L)$ such that \eqref{thm:disantidis:repre} holds. Let $a>108\frac{C_2}{C_1}$ and $\{x_k\}_{k\in\K^+_-}\in CS(h,\varrho,a)$. Then for every $f\in\Mpl(0,L)$ we have
	\begin{equation}\label{lem:disantidissup:eq}
		\begin{aligned}
			\sum_{k\in\K^+}\left(\esssup_{t\in(x_{k-1},x_k]}\frac{h^\frac1{p}(t)}{\varrho^\frac1{p}(t)}f(t)\right)^p&\approx\sum_{k\in\K^+_-}h(x_k)\left(\esssup_{t\in(0,L)}\frac{f(t)}{\varrho^\frac1{p}(x_k)+\varrho^\frac1{p}(t)}\right)^p\\
			&\approx\alpha\left(\esssup_{t\in(0,L)}\frac{f(t)}{\varrho^\frac1{p}(t)}\right)^p+\beta\left(\esssup_{t\in(0,L)}f(t)\right)^p\\
			&\quad+\int_{(0,L)}\left(\esssup_{\tau\in(0,L)}\frac{\varrho^\frac1{p}(t)f(\tau)}{\varrho^\frac1{p}(t)+\varrho^\frac1{p}(\tau)}\right)^p\dnu(t).
		\end{aligned}
	\end{equation}
	Moreover, the equivalence constants in \eqref{lem:disantidissup:eq} depend only on $p, a, C_1, C_2$.
\end{lemma}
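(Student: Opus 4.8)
The plan is to establish the two equivalences in \eqref{lem:disantidissup:eq} one at a time. The equivalence of the second and third quantities is an \emph{antidiscretization} step that follows from one application of Theorem~\ref{thm:disantidis} with exponent $1$, while the equivalence of the first (``fully discretized'') and second (``half-discretized'') quantities is a combinatorial manipulation of the covering sequence. At the outset I would reduce, by monotone convergence, to an $f\in\Mpl(0,L)$ that is bounded and vanishes outside a compact subinterval of $(0,L)$: all quantities in \eqref{lem:disantidissup:eq} are monotone in $f$ and pass to the limit under $f_n\uparrow f$, and for such $f$ every supremum below is finite and the auxiliary function introduced next genuinely lies in $Q_\varrho(0,L)$.

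For the second equivalence, put
\[
  F(t)=\varrho(t)\left(\esssup_{\tau\in(0,L)}\frac{f(\tau)}{\varrho^{\frac1p}(t)+\varrho^{\frac1p}(\tau)}\right)^{p}
       =\left(\esssup_{\tau\in(0,L)}\frac{\varrho^{\frac1p}(t)f(\tau)}{\varrho^{\frac1p}(t)+\varrho^{\frac1p}(\tau)}\right)^{p},\qquad t\in(0,L).
\]
For each fixed $\tau$, the factor $(\varrho^{1/p}(t)+\varrho^{1/p}(\tau))^{-1}$ is nonincreasing in $t$ and $\varrho^{1/p}(t)(\varrho^{1/p}(t)+\varrho^{1/p}(\tau))^{-1}$ is nondecreasing in $t$; hence $F$ is nondecreasing, $F/\varrho$ is nonincreasing, and $F\in Q_\varrho(0,L)$. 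Since $h$ satisfies \eqref{thm:disantidis:repre} and the hypothesis $a>108\,C_2/C_1$ is exactly the threshold required in Theorem~\ref{thm:disantidis} for the exponent $1$, applying that theorem to $h$, $\varrho$, $\nu$, the exponent $1$ and $F$ (in the role of the quasiconcave function ``$f$'' there) gives
\[
  \sum_{k\in\K^+_-}h(x_k)\left(\esssup_{t\in(0,L)}\frac{f(t)}{\varrho^{\frac1p}(x_k)+\varrho^{\frac1p}(t)}\right)^{p}
  =\sum_{k\in\K^+_-}\frac{F(x_k)h(x_k)}{\varrho(x_k)}
  \approx\alpha\lim_{t\to0^+}\frac{F(t)}{\varrho(t)}+\beta\lim_{t\to L^-}F(t)+\int_{(0,L)}F(t)\dnu(t),
\]
where the integral term of \eqref{thm:disantidis:eq} has collapsed to $\int_{(0,L)}F\dnu$ because both of its weighting factors carry the exponent $p-1=0$. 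The last integral is literally the one in \eqref{lem:disantidissup:eq}. To identify the two limit terms, observe that the suprema defining $F$ converge monotonically, so $\lim_{t\to0^+}F(t)/\varrho(t)=\left(\esssup_\tau f(\tau)/(\varrho^{1/p}(0^+)+\varrho^{1/p}(\tau))\right)^p$, and since $\varrho^{1/p}(\tau)\le\varrho^{1/p}(0^+)+\varrho^{1/p}(\tau)\le2\varrho^{1/p}(\tau)$ (monotonicity of $\varrho$) this is comparable to $\left(\esssup_\tau f(\tau)/\varrho^{1/p}(\tau)\right)^p$. Likewise $\lim_{t\to L^-}F(t)=\left(\esssup_\tau \varrho^{1/p}(L^-)f(\tau)/(\varrho^{1/p}(L^-)+\varrho^{1/p}(\tau))\right)^p$ (read as $(\esssup_\tau f(\tau))^p$ when $\varrho(L^-)=\infty$), and $\varrho^{1/p}(L^-)/(\varrho^{1/p}(L^-)+\varrho^{1/p}(\tau))\in[\tfrac12,1]$ makes this comparable to $(\esssup_\tau f(\tau))^p$. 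These comparisons only affect the equivalence constants, so the second equivalence follows.

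For the first equivalence, write $M_k=\esssup_{t\in(x_{k-1},x_k]}(h(t)/\varrho(t))f(t)^p$, so that the first quantity in \eqref{lem:disantidissup:eq} equals $\sum_{k\in\K^+}M_k$ (endpoint values being limits), and note that $(a+b)^p\approx a^p+b^p$ lets me freely replace $\esssup_t f(t)^p/(\varrho(x_k)+\varrho(t))$ by $\left(\esssup_t f(t)/(\varrho^{1/p}(x_k)+\varrho^{1/p}(t))\right)^p$ at the cost of a $p$-dependent constant. For the bound $\sum_{k\in\K^+_-}h(x_k)\esssup_t f(t)^p/(\varrho(x_k)+\varrho(t))\lesssim\sum_{k\in\K^+}M_k$ I split each inner supremum according to whether $\varrho(t)\le\varrho(x_k)$ or $\varrho(t)>\varrho(x_k)$: on $(x_{j-1},x_j]$ with $j\le k$ one has $f^p\le(\varrho(x_j)/h(x_j))M_j$ (because $\varrho/h$ is nondecreasing) and $\varrho(x_k)+\varrho(t)\ge\varrho(x_k)$, while on $(x_{j-1},x_j]$ with $j>k$ one has $f(t)^p/\varrho(t)\le M_j/h(x_{j-1})$ (because $h$ is nondecreasing); summing over $k$, and using that along $\{x_k\}$ the quotient $h/\varrho$ decays and $h$ grows geometrically by \eqref{prel:CSgeomincr}, each of the two parts telescopes to $\lesssim\sum_j M_j$ after bounding every running maximum by the corresponding series. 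For the reverse bound $\sum_{k\in\K^+}M_k\lesssim\sum_{k\in\K^+_-}h(x_k)\esssup_t f(t)^p/(\varrho(x_k)+\varrho(t))$ I use the decomposition $\K^+=\mathcal Z_1\cup\mathcal Z_2$ from \eqref{prel:CSdecomp}: if $k\in\mathcal Z_1$ then $h\approx h(x_k)\approx h(x_{k-1})$ on $[x_{k-1},x_k]$ by \eqref{prel:CSZ1}, and since $\varrho(t)\ge\varrho(x_{k-1})$ there, $M_k\approx h(x_{k-1})\esssup_{(x_{k-1},x_k]}(f^p/\varrho)\lesssim h(x_{k-1})\esssup_{t\in(0,L)}f(t)^p/(\varrho(x_{k-1})+\varrho(t))$; if $k\in\mathcal Z_2$ then $\varrho/h\approx\varrho(x_k)/h(x_k)$ on $[x_{k-1},x_k]$ by \eqref{prel:CSZ2}, and since $\varrho(t)\le\varrho(x_k)$ there, $M_k\approx(h(x_k)/\varrho(x_k))\esssup_{(x_{k-1},x_k]}f^p\lesssim h(x_k)\esssup_{t\in(0,L)}f(t)^p/(\varrho(x_k)+\varrho(t))$; since each index of the half-discretized sum is produced by at most two values of $k$, summing finishes the estimate. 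The endpoint pieces — the tail $[x_{K^+-1},L)$, the initial piece $(0,x_{K_-+1}]$, and the degenerate ends $K_-=-\infty$ or $K^+=\infty$ — are handled by the remaining covering-sequence properties: on those pieces the relevant one of $h$ and $\varrho/h$ is comparable to its endpoint value up to the factor $a$, and by \eqref{prel:CSdegen0}, \eqref{prel:CSdegenL} together with \eqref{thm:disantidis:repre} a degenerate end forces $\alpha=0$, resp.\ $\beta=0$, so no term of the sum is lost.

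The main obstacle is the bookkeeping in the first equivalence: one must keep straight that along the covering sequence $h$ grows geometrically while $h/\varrho$ decays geometrically (so the two ``split'' sums telescope in opposite directions), handle all four combinations of $K_-\in\{-\infty,\text{ finite}\}$ and $K^+\in\{\infty,\text{ finite}\}$ together with the limit conventions, and verify that the $\mathcal Z_1$/$\mathcal Z_2$ dichotomy really assigns to each $k$ a single term of the half-discretized sum, with overlap at most two. By contrast, once the quasiconcave function $F$ is in place, the equivalence of the second and third quantities is immediate from Theorem~\ref{thm:disantidis}, the only further work being the elementary identification of the two limit terms.
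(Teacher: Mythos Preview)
Your handling of the second equivalence is essentially identical to the paper's: you define the same auxiliary function $F$, check $F\in Q_\varrho(0,L)$, apply Theorem~\ref{thm:disantidis} with exponent $1$, and identify the two limit terms; the paper does exactly this (it phrases the finiteness reduction as ``we may assume $f$ is finite a.e.'', whereas you truncate to bounded compactly supported $f$ and pass to the limit, which is a harmless variant).

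The genuine difference is the first equivalence. The paper does not argue it at all: it simply observes $h^{1/p}\in Q_{\varrho^{1/p}}(0,L)$, $\{x_k\}\in CS(h^{1/p},\varrho^{1/p},a^{1/p})$, and invokes \cite[Theorem~4.2.7 and Remark~4.2.8]{EGO:18}. Your route is a self-contained proof of that cited result: the two-sided splitting of the inner supremum combined with the geometric growth/decay from \eqref{prel:CSgeomincr} for one direction, and the $\mathcal Z_1/\mathcal Z_2$ decomposition \eqref{prel:CSdecomp}--\eqref{prel:CSZ2} for the other. This is correct and is, in essence, how such discretization lemmas are proved in \cite{EGO:18}; the payoff is that your argument is self-contained, at the cost of the extra bookkeeping you yourself flag. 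One small wrinkle: your closing remark that ``a degenerate end forces $\alpha=0$, resp.\ $\beta=0$'' is a statement about the representation \eqref{thm:disantidis:repre} and belongs to the second equivalence, not the first; the first equivalence makes no reference to $\alpha,\beta,\nu$, and the extra index $k=K_-$ in $\K^+_-$ (when $K_->-\infty$) is already absorbed by your telescoping argument since the geometric estimates from \eqref{prel:CSgeomincr} control the tail sum $\sum_{k<j}h(x_k)$ regardless of whether $K_-$ is finite.
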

\begin{proof}
	We may clearly assume that $h\not\equiv0$ on $(0,L)$ (recall Convention~\ref{conv}(\ref{conv:indefinite})), and so $h\neq0$ on $(0,L)$. Note that 
$h^\frac1{p}\in Q_{\varrho^\frac1{p}}(0,L)$ and $\{x_k\}_{k\in\K^+_-}\in CS(h^\frac1{p},\varrho^\frac1{p},a^\frac1{p})$. The first equivalence follows from \citep[Theorem~4.2.7 and Remark 4.2.8]{EGO:18}.
	
	As for the second equivalence, observe that the function $t\mapsto\left(\esssup_{\tau\in(0,L)}\frac{\varrho^\frac1{p}(t)f(\tau)}{\varrho^\frac1{p}(t)+\varrho^\frac1{p}(\tau)}\right)^p$ is in $Q_{\varrho}(0,L)$ (we may assume that $f$ is finite a.e.~in $(0,L)$, otherwise \eqref{lem:disantidissup:eq} holds plainly). Consider Theorem~\ref{thm:disantidis} with the setting $\widetilde{p} = 1$, $\widetilde f(t)=\left(\esssup_{\tau\in(0,L)}\frac{\varrho^\frac1{p}(t)f(\tau)}{\varrho^\frac1{p}(t)+\varrho^\frac1{p}(\tau)}\right)^p$, and $\widetilde\varrho=\varrho$, where the symbols with tildes correspond to those from the statement of the theorem. It implies
	\begin{align*}
		&\sum_{k\in\K^+_-}h(x_k)\left(\esssup_{t\in(0,L)}\frac{f(t)}{\varrho^\frac1{p}(x_k)+\varrho^\frac1{p}(t)}\right)^p\\
		& \approx \alpha\lim_{t\to0^+}\left(\esssup_{\tau\in(0,L)}\frac{f(\tau)}{\varrho^\frac1{p}(t)+\varrho^\frac1{p}(\tau)}\right)^p+\beta\lim_{t\to L^-}\left(\esssup_{\tau\in(0,L)}\frac{\varrho^\frac1{p}(t)f(\tau)}{\varrho^\frac1{p}(t)+\varrho^\frac1{p}(\tau)}\right)^p\\
		&+\int_{(0,L)}\left(\esssup_{\tau\in(0,L)}\frac{\varrho^\frac1{p}(t)f(\tau)}{\varrho^\frac1{p}(t)+\varrho^\frac1{p}(\tau)}\right)^p\dnu(t),
	\end{align*}
	which is the second equivalence in \eqref{lem:disantidissup:eq} upon observing that
	\begin{align*}
		\lim_{t\to0^+}\left(\esssup_{\tau\in(0,L)}\frac{f(\tau)}{\varrho^\frac1{p}(t)+\varrho^\frac1{p}(\tau)}\right)^p&\approx\left(\esssup_{\tau\in(0,L)}\frac{f(\tau)}{\varrho^\frac1{p}(\tau)}\right)^p\\
		\intertext{and}
		\lim_{t\to L^-}\left(\esssup_{\tau\in(0,L)}\frac{\varrho^\frac1{p}(t)f(\tau)}{\varrho^\frac1{p}(t)+\varrho^\frac1{p}(\tau)}\right)^p&\approx\left(\esssup_{\tau\in(0,L)}f(\tau)\right)^p.
	\end{align*}
\end{proof}

\begin{lemma}\label{lem:disantidisint} 
	Let $p\in(0,\infty)$ and $h\in Q_\varrho(0,L)$. Assume that there exist $C_1,\ C_2\in (0,\infty)$, $\alpha,\beta\in[0,\infty)$ and a nonnegative Borel measure $\nu$ on $(0,L)$ such that \eqref{thm:disantidis:repre} holds. Let $a>108\frac{C_2}{C_1}$ and $\{x_k\}_{k\in\K^+_-}\in CS(h,\varrho,a)$. Then for every $f\in\Mpl(0,L)$ we have
	\begin{equation}\label{lem:disantidisint:eq}
		\begin{aligned}
			\sum_{k\in\K^+}\left(\int_{x_{k-1}}^{x_k}\frac{h^\frac1{p}(t)}{\varrho^\frac1{p}(t)}f(t) \dt\right)^p&\approx\sum_{k\in\K^+_-}h(x_k)\left(\int_0^L\frac{f(t)}{\varrho^\frac1{p}(x_k)+\varrho^\frac1{p}(t)} \dt\right)^p\\
			&\approx\alpha\left(\int_0^L\frac{f(t)}{\varrho^\frac1{p}(t)}\dt \right)^p+\beta\left(\int_0^Lf(t)\dt \right)^p\\
			&\quad+\int_{(0,L)}\left(\int_0^L\frac{\varrho^\frac1{p}(t)f(s)}{\varrho^\frac1{p}(t)+\varrho^\frac1{p}(s)}\ds \right)^p\dnu(t).
		\end{aligned}
	\end{equation}
	
	Moreover, the equivalence constants in \eqref{lem:disantidisint:eq} depend only on $p, a, C_1, C_2$.
\end{lemma}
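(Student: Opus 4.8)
The plan is to follow the proof of Lemma~\ref{lem:disantidissup} line by line, with essential suprema replaced by integrals. First I would reduce to the case $h\not\equiv0$ on $(0,L)$ (so that $h\neq0$ everywhere on $(0,L)$), and then observe that we may assume all three quantities in \eqref{lem:disantidisint:eq} to be finite, since each $\approx$ below preserves finiteness in both directions, so otherwise they are all infinite and the assertion is trivial. Since $h^{\frac1p}\in Q_{\varrho^{\frac1p}}(0,L)$ and $\{x_k\}_{k\in\K^+_-}\in CS(h^{\frac1p},\varrho^{\frac1p},a^{\frac1p})$, the first equivalence in \eqref{lem:disantidisint:eq} is the integral counterpart of the discretization result \citep[Theorem~4.2.7 and Remark~4.2.8]{EGO:18} used in the proof of Lemma~\ref{lem:disantidissup}, which is likewise available in \citep[Chapter~4]{EGO:18}. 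This is the only step carrying genuinely new, degeneracy-sensitive content; everything below is bookkeeping built on Theorem~\ref{thm:disantidis}.

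For the second equivalence I would set
\[
	\widetilde f(t)=\left(\int_0^L\frac{\varrho^{\frac1p}(t)f(s)}{\varrho^{\frac1p}(t)+\varrho^{\frac1p}(s)}\ds\right)^p,\quad t\in(0,L),
\]
and first check that $\widetilde f\in Q_\varrho(0,L)$: for each fixed $s$ the map $t\mapsto\varrho^{\frac1p}(t)\big(\varrho^{\frac1p}(t)+\varrho^{\frac1p}(s)\big)^{-1}$ is nondecreasing and, divided by $\varrho^{\frac1p}(t)$, equals the nonincreasing function $\big(\varrho^{\frac1p}(t)+\varrho^{\frac1p}(s)\big)^{-1}$, hence it is $\varrho^{\frac1p}$-quasiconcave; integrating against $f(s)\ds$ keeps $\varrho^{\frac1p}$-quasiconcavity, and raising to the power $p$ turns a $\varrho^{\frac1p}$-quasiconcave function into a $\varrho$-quasiconcave one. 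Then I would apply Theorem~\ref{thm:disantidis} with $\widetilde p=1$, $\widetilde\varrho=\varrho$, and $\widetilde f$ as above (the tilded symbols being those of that theorem): the representation \eqref{thm:disantidis:repre} holds by hypothesis, and the admissibility condition on $a$ in Theorem~\ref{thm:disantidis} specializes, for $\widetilde p=1$, to exactly $a>108\frac{C_2}{C_1}$, which is assumed. Because $\widetilde p=1$, the integral term on the right-hand side of \eqref{thm:disantidis:eq} collapses to $\int_{(0,L)}\widetilde f(t)\dnu(t)$, and since $\widetilde f(x_k)=\varrho(x_k)\big(\int_0^L f(s)(\varrho^{\frac1p}(x_k)+\varrho^{\frac1p}(s))^{-1}\ds\big)^p$, the left-hand side $\sum_{k\in\K^+_-}\widetilde f(x_k)h(x_k)/\varrho(x_k)$ of \eqref{thm:disantidis:eq} is precisely the middle expression in \eqref{lem:disantidisint:eq}.

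It then remains to identify the two limit terms produced by Theorem~\ref{thm:disantidis} with the $\alpha$- and $\beta$-terms in \eqref{lem:disantidisint:eq}, i.e.\ to verify
\[
	\lim_{t\to0^+}\frac{\widetilde f(t)}{\varrho(t)}\approx\left(\int_0^L\frac{f(s)}{\varrho^{\frac1p}(s)}\ds\right)^p
	\quad\text{and}\quad
	\lim_{t\to L^-}\widetilde f(t)\approx\left(\int_0^Lf(s)\ds\right)^p .
\]
Both limits exist by monotone convergence; the first equals $\big(\int_0^L f(s)(\varrho^{\frac1p}(0)+\varrho^{\frac1p}(s))^{-1}\ds\big)^p$, and the equivalence follows from $0\le\varrho^{\frac1p}(0)\le\varrho^{\frac1p}(s)$ on $(0,L)$, which gives $(\varrho^{\frac1p}(0)+\varrho^{\frac1p}(s))^{-1}\approx\varrho^{-\frac1p}(s)$; for the second, as $t\to L^-$ the factor $\varrho^{\frac1p}(t)(\varrho^{\frac1p}(t)+\varrho^{\frac1p}(s))^{-1}$ increases to $1$ when $\varrho(L)=\infty$ and to a value in $[\tfrac12,1)$ when $\varrho(L)<\infty$ (using $\varrho(s)\le\varrho(L)$), so in either case the limit is $\approx(\int_0^Lf(s)\ds)^p$. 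This is the same computation as in the proof of Lemma~\ref{lem:disantidissup}, so no new difficulty arises inside this proof: the real obstacle is the degeneracy-robust first equivalence, which is supplied by the machinery of \citep[Chapter~4]{EGO:18}; granted that, the lemma follows by the chain of identifications above.
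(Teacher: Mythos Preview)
Your proposal is correct and follows essentially the same route as the paper's own (omitted) proof: the first equivalence is taken from the integral discretization result in \citep[Chapter~4]{EGO:18} (the paper pins this down as \citep[Theorem~4.2.5 and Remark~4.2.6]{EGO:18}), and the second equivalence is obtained by applying Theorem~\ref{thm:disantidis} with $\widetilde p=1$ to the $\varrho$-quasiconcave function $\widetilde f(t)=\big(\int_0^L\varrho^{1/p}(t)f(s)(\varrho^{1/p}(t)+\varrho^{1/p}(s))^{-1}\ds\big)^p$, exactly mirroring the proof of Lemma~\ref{lem:disantidissup}. Your identification of the limit terms and your verification that the hypothesis $a>108\,C_2/C_1$ matches the threshold in Theorem~\ref{thm:disantidis} for $\widetilde p=1$ are both on target.
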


\begin{proof}
	We omit the proof because it is similar to the proof of Lemma~\ref{lem:disantidissup}. We just note that the first equivalence in \eqref{lem:disantidisint:eq} follows from \citep[Theorem~4.2.5 and Remark 4.2.6]{EGO:18}.
\end{proof}

\begin{lemma}\label{lem:discantisup} 
	Let $p\in(0, \infty)$, $\varphi\in Q_\varrho(0,L)$ and $\{x_k\}_{k\in\K^+_-}\in CS(\varphi,\varrho,a)$ with $a>1$. For every $f\in\Mpl(0,L)$ we have
	\begin{equation}\label{lem:discantisup:eq}
		\begin{aligned}
			\sup_{t\in(0,L)}\varphi(t)\left(\int_0^L\frac{f(s)}{\varrho^\frac1{p}(t)+\varrho^\frac1{p}(s)}\ds \right)^p&\approx\sup_{k\in\K^+_-}\varphi(x_k)\left(\int_0^L\frac{f(s)}{\varrho^\frac1{p}(x_k)+\varrho^\frac1{p}(s)}\ds \right)^p\\
			&\approx\sup_{k\in\K^+}\left(\int_{x_{k-1}}^{x_k}f(s)\frac{\varphi^\frac1{p}(s)}{\varrho^\frac1{p}(s)}\ds \right)^p.
		\end{aligned}
	\end{equation}
	
	Moreover, the equivalence constants depend only on $p$ and $a$.
\end{lemma}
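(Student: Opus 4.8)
The plan is to pass to $p$-th roots and then run the usual discretization--antidiscretization comparison; the one genuinely delicate point is the treatment of the extreme indices of the covering sequence, which is where I expect the real work to lie. First I would dispose of the case $\varphi\equiv0$ (all three quantities vanish) and otherwise use that $\varphi>0$ on $(0,L)$. Since $t\mapsto t^{p}$ is an increasing bijection of $[0,\infty)$, it suffices to prove the chain in \eqref{lem:discantisup:eq} with the outer $p$-th powers removed. I would set $\sigma=\varrho^{1/p}$ (again admissible) and $\eta=\varphi^{1/p}\in Q_{\sigma}(0,L)$, observe that, upon taking $1/p$-th powers, every covering-sequence property from Section~\ref{sec:prel} for $\varphi,\varrho$ --- the geometric growth \eqref{prel:CSgeomincr}, the one-sided localization estimates, and the decomposition \eqref{prel:CSdecomp}--\eqref{prel:CSZ2} --- transfers to $\eta,\sigma$ with $a$ replaced by $a^{1/p}$, and abbreviate $F(t)=\eta(t)\int_0^L f(s)/(\sigma(t)+\sigma(s))\ds$ and $D_k=\int_{x_{k-1}}^{x_k}f(s)\eta(s)/\sigma(s)\ds$, so that the goal becomes $\sup_{t\in(0,L)}F(t)\approx\sup_{k\in\K^+_-}F(x_k)\approx\sup_{k\in\K^+}D_k$. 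Throughout I would use the elementary estimate $\int_0^L f(s)/(\sigma(t)+\sigma(s))\ds\approx \sigma(t)^{-1}\int_0^t f+\int_t^L f/\sigma$, which holds with absolute constants because $\sigma$ is strictly increasing, together with its consequences $\sigma(y)^{-1}\eta(y)\int_0^y f\le2F(y)$ and $\eta(y)\int_y^L f/\sigma\le2F(y)$.

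For the second equivalence I would first prove $\sup_{k\in\K^+}D_k\lesssim\sup_{k\in\K^+_-}F(x_k)$ termwise via the dichotomy \eqref{prel:CSdecomp}: if $k\in\mathcal{Z}_1$, then $\eta$ is comparable to $\eta(x_{k-1})$ on $[x_{k-1},x_k]$ by \eqref{prel:CSZ1} and $1/\sigma(s)\le2/(\sigma(x_{k-1})+\sigma(s))$ there, which yields $D_k\lesssim F(x_{k-1})$; if $k\in\mathcal{Z}_2$, then $\eta/\sigma$ is comparable to $\eta(x_k)/\sigma(x_k)$ on $[x_{k-1},x_k]$ by \eqref{prel:CSZ2} and $1/\sigma(x_k)\le2/(\sigma(x_k)+\sigma(s))$ there, which yields $D_k\lesssim F(x_k)$. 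For the reverse inequality $\sup_{k\in\K^+_-}F(x_k)\lesssim\sup_{k\in\K^+}D_k$ I would fix $k$, bound $F(x_k)$ by a constant times $\sigma(x_k)^{-1}\eta(x_k)\int_0^{x_k}f+\eta(x_k)\int_{x_k}^L f/\sigma$ using the elementary estimate, split each of the two integrals along the covering sequence, estimate the pieces by $\sigma(x_m)\eta(x_m)^{-1}D_m$ for $m\le k$ and by $\eta(x_{m-1})^{-1}D_m$ for $m\ge k+1$ (using that $\eta/\sigma$ is nonincreasing and $\eta$ nondecreasing), and finally sum the two resulting geometric series of ratio $a^{-1/p}$ by means of \eqref{prel:CSgeomincr}.

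For the first equivalence, $\sup_{k\in\K^+_-}F(x_k)\le\sup_{t\in(0,L)}F(t)$ is immediate at the interior points $x_k\in(0,L)$, and the at most two boundary indices $x_{K_-}=0$ and $x_{K^+}=L$ I would handle by continuity of $\eta$, monotone convergence, and Convention~\ref{conv} (which annihilates the terms that would otherwise be $0\cdot\infty$). For the reverse inequality I would, given $t\in(0,L)$, use \eqref{prel:CSunion} to pick $j\in\K^+$ with $t\in(x_{j-1},x_j]$: if $j\in\mathcal{Z}_1$, then $\eta(t)\approx\eta(x_{j-1})$ and $\sigma(t)\ge\sigma(x_{j-1})$ give at once $F(t)\lesssim F(x_{j-1})$; if $j\in\mathcal{Z}_2$, then $\eta(t)/\sigma(t)\approx\eta(x_j)/\sigma(x_j)$, and after expanding $F(t)$ by the elementary estimate and further splitting $\int_t^L f/\sigma=\int_t^{x_j}f/\sigma+\int_{x_j}^L f/\sigma$, each of the three pieces is bounded by a constant times $F(x_j)$ (using also $\eta(s)\ge\eta(t)$ on $(x_{j-1},x_j]$ and the two consequences of the elementary estimate). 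Taking suprema and chaining the three comparisons would complete the argument, with all equivalence constants depending only on $p$ and $a$.

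The step I expect to be the main obstacle is the bookkeeping at the extreme indices of the covering sequence: the geometric growth \eqref{prel:CSgeomincr} is available only for $K_-+2\le k\le K^+-1$, so the finitely many terms corresponding to indices near $K_-$ and $K^+$ must be handled separately by means of the one-sided localization bounds near $0$ and $L$ from Section~\ref{sec:prel}, and one must simultaneously allow for the possibilities $\sigma(L)=\infty$, $\eta(L)=\infty$, or divergent integrals, all of which are absorbed by Convention~\ref{conv}. Apart from this boundary care, the proof uses nothing beyond the monotonicity of $\eta$ and $\eta/\sigma$ and the summation of geometric series.
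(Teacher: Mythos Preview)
Your proposal is correct, and the overall architecture is sound. The first equivalence is handled by essentially the same $\mathcal{Z}_1/\mathcal{Z}_2$ dichotomy as in the paper, and your identification of the boundary indices as the one delicate point is accurate.

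The genuine difference lies in the \emph{second} equivalence. The paper does not prove it directly: it rewrites the middle term as
\[
\sup_{k\in\K^+_-}\frac{\varphi^{1/p}(x_k)}{\varrho^{1/p}(x_k)}\int_0^L\min\{\varrho^{1/p}(x_k),\varrho^{1/p}(s)\}\,\d\nu(s),\qquad \d\nu(s)=f(s)\varrho^{-1/p}(s)\ds,
\]
and then invokes \citep[Theorem~4.2.5 and Remark~4.2.6]{EGO:18} as a black box. Your route is instead fully self-contained: you bound each $D_k$ termwise by $F(x_{k-1})$ or $F(x_k)$ via the dichotomy, and for the reverse inequality you expand $F(x_k)$ into the two tails, peel off the pieces along the covering sequence, and sum the resulting geometric series of ratio $a^{-1/p}$ using \eqref{prel:CSgeomincr}. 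This is more elementary and makes the dependence on $p$ and $a$ transparent, at the cost of having to do the endpoint bookkeeping (at most two extra indices on each side, handled by the one-sided localization bounds near $0$ and $L$) that the citation absorbs. In particular, your geometric-series step only uses \eqref{prel:CSgeomincr} in the range $K_-+2\le k\le K^+-1$; the remaining terms are finitely many and their ratios are controlled by the localization estimates, so the argument closes as you indicate.
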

\begin{proof}
	We may clearly assume that $\varphi\not\equiv0$ on $(0,L)$, and so $\varphi\neq0$ on $(0,L)$. The second equivalence in \eqref{lem:discantisup:eq} follows from \citep[Theorem~4.2.5 and Remark~4.2.6]{EGO:18} (note that $\varphi^\frac1{p}\in Q_{\varrho^\frac1{p}}(0,L)$ and $\{x_k\}_{k\in\K^+_-}\in CS(\varphi^\frac1{p},\varrho^\frac1{p},a^\frac1{p}$)) upon observing that
	\begin{align*}
		\sup_{k\in\K^+_-}\varphi(x_k)\left(\int_0^L\frac{f(s)}{\varrho^\frac1{p}(x_k)+\varrho^\frac1{p}(s)}\ds \right)^p &= \left(\sup_{k\in\K^+_-}\varphi^\frac1{p}(x_k)\int_0^L\frac{f(s)}{\varrho^\frac1{p}(x_k)+\varrho^\frac1{p}(s)}\ds \right)^p\\
		&\approx\left(\sup_{k\in\K^+_-}\frac{\varphi^\frac1{p}(x_k)}{\varrho^\frac1{p}(x_k)}\int_0^L\min\{\varrho^\frac1{p}(x_k),\varrho^\frac1{p}(s)\}\dnu(s)\right)^p,
	\end{align*}
	where $\d\nu(s)=f(s)\varrho^{-\frac1{p}}(s)\ds $.
	
	We shall prove the first equivalence in \eqref{lem:discantisup:eq}. Using \eqref{prel:CSZ1} and \eqref{prel:CSZ2}, we have
	\begin{align}
		&\hspace{-20pt}\sup_{t\in(0,L)}\varphi(t)\left(\int_0^L\frac{f(s)}{\varrho^\frac1{p}(t)+\varrho^\frac1{p}(s)}\ds \right)^p \notag\\
		&=\sup_{k\in\K^+}\sup_{t\in{(x_{k-1},x_k]}}\varphi(t)\left(\int_0^L\frac{f(s)}{\varrho^\frac1{p}(t) + \varrho^\frac1{p}(s)}\ds \right)^p \notag\\
		&\approx\sup_{k\in\mathcal{Z}_1}\sup_{t\in{(x_{k-1},x_k]}}\varphi(t)\left(\int_0^L\frac{f(s)}{\varrho^\frac1{p}(t) + \varrho^\frac1{p}(s)}\ds \right)^p
		+\sup_{k\in\mathcal{Z}_2}\sup_{t\in{(x_{k-1},x_k]}}\varphi(t)\left(\int_0^L\frac{f(s)}{\varrho^\frac1{p}(t)+\varrho^\frac1{p}(s)}\ds \right)^p \notag\\
		&\approx\sup_{k\in\mathcal{Z}_1}\varphi(x_{k-1})\sup_{t\in{(x_{k-1},x_k]}}\left(\int_0^L\frac{f(s)}{\varrho^\frac1{p}(t)+\varrho^\frac1{p}(s)}\ds \right)^p
		+\sup_{k\in\mathcal{Z}_2}\frac{\varphi(x_k)}{\varrho(x_k)}\sup_{t\in{(x_{k-1},x_k]}}\left(\int_0^L\frac{\varrho^\frac1{p}(t)f(s)}{\varrho^\frac1{p}(t)+\varrho^\frac1{p}(s)}\ds \right)^p \notag\\
		&=\sup_{k\in\mathcal{Z}_1}\varphi(x_{k-1})\left(\int_0^L\frac{f(s)}{\varrho^\frac1{p}(x_{k-1}) + \varrho^\frac1{p}(s)}\ds \right)^p
		+\sup_{k\in\mathcal{Z}_2}\varphi(x_k)\left(\int_0^L\frac{f(s)}{\varrho^\frac1{p}(x_k)+\varrho^\frac1{p}(s)}\ds \right)^p,\label{lem:discantisup:eq1}
	\end{align}
	where $\K^+=\mathcal{Z}_1\cup\mathcal{Z}_2$ is a decomposition of $\K^+$ from \eqref{prel:CSdecomp}. Note that the second equivalence in \eqref{lem:discantisup:eq1} is valid even when $K_-+1\in\mathcal{Z}_1$ or $K_+\in\mathcal{Z}_2$. Indeed, if $K_-+1\in\mathcal{Z}_1$ (and so $K_->-\infty$), we have $\varphi(x_{K_-})=\lim_{t\to0^+}\varphi(t)\approx\varphi(x_{K_-+1})>0$ thanks to \eqref{prel:CSZ1}. Hence,
	\begin{align*}
		\sup_{t\in{(0,x_{K_-+1}]}}\varphi(t)\left(\int_0^L\frac{f(s)}{\varrho^\frac1{p}(t)+\varrho^\frac1{p}(s)}\ds \right)^p&\approx\left(\lim_{t\to0^+}\varphi(t)\right)\left(\sup_{t\in{(0,x_{K_-+1}]}}\left(\int_0^L\frac{f(s)}{\varrho^\frac1{p}(t) + \varrho^\frac1{p}(s)}\ds \right)^p\right)\\
		&=\varphi(x_{K_-})\left(\sup_{t\in{(0,x_{K_-+1}]}}\left(\int_0^L\frac{f(s)}{\varrho^\frac1{p}(t) + \varrho^\frac1{p}(s)}\ds \right)^p\right).
	\end{align*}
	Analogously, one may show that, if $K_+\in\mathcal{Z}_2$, then
	\begin{equation*}
		\sup_{t\in{(x_{K_+-1},L)}}\varphi(t)\left(\int_0^L\frac{f(s)}{\varrho^\frac1{p}(t)+\varrho^\frac1{p}(s)}\ds \right)^p\approx\varphi(x_{K_+})\sup_{t\in{(x_{K_+-1},L)}}\left(\int_0^L\frac{f(s)}{\varrho^\frac1{p}(t)+\varrho^\frac1{p}(s)}\ds \right)^p.
	\end{equation*}
	Next, one clearly has
	\begin{align}
		&\sup_{k\in\mathcal{Z}_1}\varphi(x_{k-1})\left(\int_0^L\frac{f(s)}{\varrho^\frac1{p}(x_{k-1})+\varrho^\frac1{p}(s)}\ds\right)^p
		+\sup_{k\in\mathcal{Z}_2}\varphi(x_k)\left(\int_0^L\frac{f(s)}{\varrho^\frac1{p}(x_k) + \varrho^\frac1{p}(s)}\ds \right)^p \notag\\
		&\qquad\lesssim\sup_{k\in\K^+_-}\varphi(x_k)\left(\int_0^L\frac{f(s)}{\varrho^\frac1{p}(x_k) + \varrho^\frac1{p}(s)}\ds \right)^p. \label{lem:discantisup:eq2}
	\end{align}
	For any $k\in\mathcal{Z}_1$ we have
	\begin{align*}
		\varphi(x_k)\left(\int_0^L\frac{f(s)}{\varrho^\frac1{p}(x_k)+\varrho^\frac1{p}(s)}\ds \right)^p & \approx\varphi(x_{k-1})\left(\int_0^L\frac{f(s)}{\varrho^\frac1{p}(x_k) + \varrho^\frac1{p}(s)}\ds \right)^p \\
		& \le\varphi(x_{k-1})\left(\int_0^L\frac{f(s)}{\varrho^\frac1{p}(x_{k-1}) + \varrho^\frac1{p}(s)}\ds \right)^p.
	\end{align*}
	Hence,
	\begin{align}
		&\sup_{k\in\K^+}\varphi(x_k)\left(\int_0^L\frac{f(s)}{\varrho^\frac1{p}(x_k)+\varrho^\frac1{p}(s)}\ds \right)^p \notag\\
		&\quad \lesssim\sup_{k\in\mathcal{Z}_1}\varphi(x_{k-1})\left(\int_0^L\frac{f(s)}{\varrho^\frac1{p}(x_{k-1})+\varrho^\frac1{p}(s)}\ds \right)^p+\sup_{k\in\mathcal{Z}_2}\varphi(x_k)\left(\int_0^L\frac{f(s)}{\varrho^\frac1{p}(x_k)+\varrho^\frac1{p}(s)}\ds \right)^p. \label{lem:discantisup:eq3}
	\end{align}
	If $K_-=-\infty$ (and so $\K_-^+=\K^+$), we obtain
	\begin{align}
		&\sup_{k\in\mathcal{Z}_1}\varphi(x_{k-1})\left(\int_0^L\frac{f(s)}{\varrho^\frac1{p}(x_{k-1})+\varrho^\frac1{p}(s)}\ds \right)^p +\sup_{k\in\mathcal{Z}_2}\varphi(x_k)\left(\int_0^L\frac{f(s)}{\varrho^\frac1{p}(x_k)+\varrho^\frac1{p}(s)}\ds \right)^p \notag\\
		&\quad \approx\sup_{k\in\K^+_-}\varphi(x_k)\left(\int_0^L\frac{f(s)}{\varrho^\frac1{p}(x_k)+\varrho^\frac1{p}(s)}\ds \right)^p \label{lem:discantisup:eq4}
	\end{align}
	by combining \eqref{lem:discantisup:eq2} with \eqref{lem:discantisup:eq3}.
	
	Now suppose that $K_->-\infty$. If $K_-+1\in\mathcal{Z}_2$, then $\lim_{t\to0^+}\frac{\varphi(t)}{\varrho(t)}\approx\frac{\varphi(x_{K_-+1})}{\varrho(x_{K_-+1})}\in(0,\infty)$ thanks to \eqref{prel:CSZ2}, and so
	\begin{align*}
		\lim_{t\to0^+}\varphi(t)\left(\int_0^L\frac{f(s)}{\varrho^\frac1{p}(t)+\varrho^\frac1{p}(s)}\ds \right)^p&\le\left(\lim_{t\to0^+}\frac{\varphi(t)}{\varrho(t)}\right)\left(\sup_{t\in(0,x_{K_-+1}]}\left(\int_0^L\frac{\varrho^\frac1{p}(t)f(s)}{\varrho^\frac1{p}(t)+\varrho^\frac1{p}(s)}\ds \right)^p\right)\\
		&\approx\frac{\varphi(x_{K_-+1})}{\varrho(x_{K_-+1)}}\left(\int_0^L\frac{\varrho^\frac1{p}(x_{K-+1})f(s)}{\varrho^\frac1{p}(x_{K-+1})+\varrho^\frac1{p}(s)}\ds \right)^p\\
		&\leq \sup_{k\in\mathcal{Z}_2}\varphi(x_k)\left(\int_0^L\frac{f(s)}{\varrho^\frac1{p}(x_k)+\varrho^\frac1{p}(s)}\ds \right)^p.
	\end{align*}
	If $K_-+1\in\mathcal{Z}_1$, we plainly have
	\begin{equation*}
		\lim_{t\to0^+}\varphi(t)\left(\int_0^L\frac{f(s)}{\varrho^\frac1{p}(t)+\varrho^\frac1{p}(s)}\ds \right)^p\leq\sup_{k\in\mathcal{Z}_1}\varphi(x_{k-1})\left(\int_0^L\frac{f(s)}{\varrho^\frac1{p}(x_{k-1})+\varrho^\frac1{p}(s)}\ds \right)^p.
	\end{equation*}
	Hence, whether $K_-+1\in\mathcal{Z}_1$ or $K_-+1\in\mathcal{Z}_2$, we obtain
	\begin{align*}
		&\lim_{t\to 0^+}\varphi(t)\left(\int_0^L\frac{f(s)}{\varrho^\frac1{p}(t)+\varrho^\frac1{p}(s)}\ds \right)^p\\
		& \quad \lesssim \sup_{k\in\mathcal{Z}_1}\varphi(x_{k-1})\left(\int_0^L\frac{f(s)}{\varrho^\frac1{p}(x_{k-1})+\varrho^\frac1{p}(s)}\ds \right)^p
		+\sup_{k\in\mathcal{Z}_2}\varphi(x_k)\left(\int_0^L\frac{f(s)}{\varrho^\frac1{p}(x_k)+\varrho^\frac1{p}(s)}\ds \right)^p.
	\end{align*}
	Therefore, combining the last inequality with \eqref{lem:discantisup:eq2} and \eqref{lem:discantisup:eq3}, we obtain equivalence \eqref{lem:discantisup:eq4} even when $K_->-\infty$.
	
	Finally, the first equivalence in \eqref{lem:discantisup:eq} follows by combining \eqref{lem:discantisup:eq1} with \eqref{lem:discantisup:eq4}.
\end{proof}

\section{Main results}

We are finally ready to present our main results. The first one is the desired characterization of \eqref{I:main} when all the involved exponents are finite.

\begin{theorem}\label{thm:HanickaToLambda}
        Let $p,q\in(0,\infty)$. Let $v,w$ be weights on $(0,L)$ and $u$ an a.e.~positive weight on $(0,L)$.
        Set
        \begin{equation*}
                C = \sup_{\|f\|_{\Gpuv}\le1}\|f\|_{\Lambda^{q}(w)}.
        \end{equation*}
        \begin{enumerate}[\rm (i)]
                \item
                If $1\le q$ and $p\le q<\infty$, then $C\approx A_1$, where
                \begin{equation*}
                        A_1 = \sup_{0<t<L}\frac{W^\frac1{q}(t)}{\left(V(t)+U^p(t)\int_t^Lv(s)U^{-p}(s)\ds \right)^\frac1{p}}.
                \end{equation*}
                \item
                If $1\le q < p<\infty$, then $C\approx A_2$, where
                \begin{equation*}
                        \begin{aligned}
                                A_2 & = \left(\int_0^L\frac{V(t)\int_t^Lv(s)U^{-p}(s)\ds \,U^{\frac{pq}{p-q}+p-1}(t)u(t)\sup_{\tau\in[t,L)}U^{-\frac{pq}{p-q}}(\tau)W^{\frac{p}{p-q}}(\tau)}{\left(V(t)+U^p(t)\int_t^Lv(s)U^{-p}(s)\ds \right)^{\frac{q}{p-q}+2}} \dt\right)^{\frac{p-q}{pq}}\\
                                &\quad+\left(\lim_{t\to0^+}\frac{U^{p}(t)}{V(t)+U^p(t)\int_{t}^Lv(s)U^{-p}(s)\ds }\right)^\frac1{p}\left(\sup_{t\in(0,L)}\frac{W(t)}{U^q(t)}\right)^\frac1{q}\\
                                &\quad+\left(\lim_{t\to L^-}\frac1{V(t)+U^p(t)\int_{t}^Lv(s)U^{-p}(s)\ds }\right)^\frac1{p}W^\frac1{q}(L).
                        \end{aligned}
                \end{equation*}
                \item
                If $p\le q<1$, then $C\approx A_3$, where
                \begin{equation*}
                        A_3 = \sup_{0<t<L}\frac{W^\frac1{q}(t)+U(t)\left(\int_t^LW^\frac{q}{1-q}(s)w(s)U^{-\frac{q}{1-q}}(s)\ds \right)^\frac{1-q}{q}}{\left(V(t)+U^p(t)\int_t^Lv(s)U^{-p}(s)\ds \right)^\frac1{p}}.
                \end{equation*}
                \item
                If $q<1$ and $q<p<\infty$, then $C\approx A_4$, where
                \begin{align*}
                        A_4 &=\left(\lim_{t\to0^+}\frac{U^p(t)}{V(t)+U^p(t)\int_{t}^Lv(s)U^{-p}(s)\ds }\right)^\frac1{p}\left(\int_0^LW^\frac{q}{1-q}(t)w(t)U^{-\frac{q}{1-q}}(t) \dt\right)^\frac{1-q}{q}\\
                        &\quad+\left(\lim_{t\to L^-}\frac1{V(t)+U^p(t)\int_{t}^Lv(s)U^{-p}(s)\ds }\right)^\frac1{p}\left(\int_0^LW^\frac{q}{1-q}(t)w(t) \dt\right)^\frac{1-q}{q}\\
                        &\quad+\left( \int_0^L\frac{\left(W^\frac{1}{1-q}(t)+U^\frac{q}{1-q}(t)\int_t^LW^\frac{q}{1-q}(s)w(s)U^{-\frac{q}{1-q}}(s)\ds \right)^\frac{p(1-q)}{p-q} } {\left(V(t)+U^p(t)\int_{t}^Lv(s)U^{-p}(s)\ds \right)^{\frac{q}{p-q}+2}} \right.\\
                        &\hspace{200pt} \times \left. \vphantom{\left( \int_0^L\frac{\left(W^\frac{1}{1-q}(t)+U^\frac{q}{1-q}(t)\int_t^LW^\frac{q}{1-q}(s)w(s)U^{-\frac{q}{1-q}}(s)\ds \right)^\frac{p(1-q)}{p-q} } {\left(V(t)+U^p(t)\int_{t}^Lv(s)U^{-p}(s)\ds \right)^{\frac{q}{p-q}+2}} \right.}
                                V(t)U^{p-1}(t)u(t)\int_t^Lv(s)U^{-p}(s)\ds \dt \right)^\frac{p-q}{pq}.
                \end{align*}
        \end{enumerate}
        The equivalence constants depend only on the parameters $p$ and $q$. In particular, they are independent of the weights $u$, $v$ and $w$.
\end{theorem}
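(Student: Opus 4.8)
The plan is to carry out the Gogatishvili--Pick discretization of the embedding $\Gpuv\hookrightarrow\Lambda^{q}(w)$, but with Theorem~\ref{thm:disantidis} and Lemmas~\ref{lem:disantidissup}--\ref{lem:discantisup} playing the role of their degeneracy-restricted predecessors, so that dropping \eqref{intro:condofnondegenerancy} costs exactly the limit terms appearing in $A_2$ and $A_4$. First I would reduce to nonincreasing functions, since both sides of the defining inequality depend on $f$ only through $\f$; thus $C=\sup\{\|g\|_{\Lambda^{q}(w)}:g\in\Mpl(0,L)\text{ nonincreasing},\ \|g\|_{\Gpuv}\le1\}$. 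At the outset one also notes that if $\int_0^L v(s)\,(U^p(s)+U^p(t))^{-1}\ds=\infty$ for some $t\in(0,L)$, then $\|g\|_{\Gpuv}=\infty$ for every $g\not\equiv0$ and $C=0$ (and all the $A_i$ vanish as well), so one may assume this quantity is finite for every $t$, which makes
\[
\Phi(t):=V(t)+U^p(t)\int_t^Lv(s)U^{-p}(s)\ds=\|\chi_{(0,t)}\|_{\Gpuv}^p,\qquad t\in(0,L),
\]
finite and positive, and a direct computation of $\Phi'$ and $(\Phi/U^p)'$ shows $\Phi\in Q_{U^p}(0,L)$. The function $\Phi$, equivalently $\Phi^{1/p}\in Q_U(0,L)$, is the quasiconcave object along whose covering sequence the whole argument is organized; its integral representation \eqref{prel:repreofQq} carries $\lim_{t\to0^+}\Phi(t)$ and $\lim_{t\to L^-}\Phi(t)U^{-p}(t)$ as its boundary coefficients, which are precisely the ingredients Theorem~\ref{thm:disantidis} and Lemmas~\ref{lem:disantidissup}--\ref{lem:discantisup} require.

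Second, I would fix $a$ large --- depending only on $p$ and $q$, via the explicit thresholds in those statements --- and a covering sequence $\{x_k\}\in CS(\Phi^{1/p},U,a)$. Using the Hardy-type representation of $f^{**}_u$ in terms of the measure $-\mathrm d\f$ (whence $U\!\cdot\!f^{**}_u\in Q_U(0,L)$), one discretizes the denominator $\int_0^L(f^{**}_u)^pv$ and, along the same sequence, the numerator $\int_0^L(\f(t))^{q}w(t)\dt$, applying Theorem~\ref{thm:disantidis} and Lemmas~\ref{lem:disantidissup}--\ref{lem:discantisup}. The endpoint contributions emitted by Theorem~\ref{thm:disantidis} turn into the boundary quantities $\lim_{t\to0^+}U^p(t)/\Phi(t)$ and $\lim_{t\to L^-}1/\Phi(t)$ --- these vanish under the classical hypothesis \eqref{intro:condofnondegenerancy}, which is why the bound of Gogatishvili and Pick does not see them, but they survive in general. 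For the reverse (lower) bound on $C$ one must also run this passage backwards: given a candidate extremal discrete profile, build a nonincreasing test function --- a step function read off the covering sequence, together with, when needed, an extremal function concentrated at $0$ or $L$ --- whose ratio realizes that profile up to constants. What is left is a purely discrete embedding of one weighted sequence space into another, with the $\Phi$-data governing the domain side and the $W$-data the target side, to be solved separately in four regimes.

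Third, I would settle these discrete inequalities. For $q\ge1$ the target norm needs no preprocessing: when $p\le q$ the discrete inequality is of ``supremum over level sets'' type and collapses to a single pointwise condition, which antidiscretizes (through Lemmas~\ref{lem:disantidissup}--\ref{lem:discantisup} and Lemma~\ref{lemma:changeofvariable}) to $A_1$; when $q<p$ one dualizes the discrete $\ell^p\hookrightarrow\ell^q$ by a discrete H\"older inequality with exponent $pq/(p-q)$, obtaining the integral term of $A_2$ together with its two limit terms, inherited verbatim from the endpoint output of Theorem~\ref{thm:disantidis}. For $q<1$ one must first rewrite $\|g\|_{\Lambda^{q}(w)}$, on nonincreasing $g$, through its level-function equivalent --- the origin of the $W^{q/(1-q)}w$ integrals and of the expression $W^{1/(1-q)}(t)+U^{q/(1-q)}(t)\int_t^LW^{q/(1-q)}(s)w(s)U^{-q/(1-q)}(s)\ds$ --- and then repeat the $p\le q$ versus $q<p$ dichotomy to reach $A_3$ and $A_4$; the two extra boundary terms in $A_4$ again come from the boundary coefficients of $\Phi$.

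The main obstacle is the whole point of the paper: running the above with \emph{no} assumption on $u$ and $v$. Without \eqref{intro:condofnondegenerancy} the covering sequence of $\Phi$ may terminate at either end, the antidiscretization genuinely emits the boundary limit terms, and the matching lower bound forces one to produce extremal test functions concentrated near the endpoints --- all of which must be handled uniformly alongside the ``bulk'' sum. This is exactly what Theorem~\ref{thm:disantidis} and Lemmas~\ref{lem:disantidissup}--\ref{lem:discantisup} were engineered to supply, so the remaining work is careful bookkeeping, above all verifying that every equivalence constant depends on $p$ and $q$ only (never on $u$, $v$, $w$), which is precisely why the explicit lower bounds on the parameter $a$ are recorded in those statements.
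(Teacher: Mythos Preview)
Your outline is correct and follows essentially the paper's own route: discretize along a covering sequence of the fundamental function $\Phi=\varphi$ of $\Gpuv$, apply Theorem~\ref{thm:disantidis} and Lemmas~\ref{lem:disantidissup}--\ref{lem:discantisup} to antidiscretize, and let the boundary coefficients of $\Phi$ produce the limit terms in $A_2$ and $A_4$.

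Two points of execution differ from what you sketch and are worth knowing. First, rather than working with the measure $-\mathrm d\f$, the paper replaces $\f(t)$ by $\int_t^L h(s)\ds$ with $h\in\Mpl(0,L)$ arbitrary, obtaining the reformulation~\eqref{thm:HanickaToLambda:equivalentexpression}; this removes the monotonicity constraint and lets Lemma~\ref{lem:disantidisint} discretize the denominator directly, while on each interval $(x_{k-1},x_k]$ the numerator is handled by the ordinary weighted Hardy inequality (with weight $\varphi^{1/p}$), which is the concrete source of the ``pointwise'' coefficient in case~(i) and of the $\ell^{r/q}$ sum in case~(ii). Second, for the lower bounds the paper does not build step test functions; instead it invokes the \emph{saturation} of the Hardy inequality on each $(x_{k-1},x_k]$ (from \cite{OK:90} and \cite{SS:96}) to reverse~\eqref{thm:HanickaToLambda:hardygeq1} and~\eqref{thm:HanickaToLambda:suffCIeq8}, yielding~\eqref{thm:HanickaToLambda:necessB}--\eqref{thm:HanickaToLambda:necessD}, after which a purely sequential argument (shifting indices via~\eqref{prel:CSgeomincr} and \cite[Lemmas~1.3.4--1.3.5]{EGO:18}) closes each case. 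Your endpoint test functions are only needed implicitly, since the boundary terms are already captured by the $k=K_-$ and $k=K^+$ summands in the discrete antidiscretization identities.
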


\begin{proof}
        First of all, note that $U$ is admissible. Furthermore, as a~prelude to the proof, let us make the following observation. Suppose that there exists a~$t_0\in(0,L)$ such that $\int_{t_0}^L v(s) U^{-p}(s)\ds = \infty$. Then the same holds, in fact, for all $t\in (0,L)$ (if $t>t_0$, consider that $v$ is locally integrable and $U$ is admissible; thus $\int_{t}^L v(s) U^{-p}(s)\ds = \infty$ must hold as well). It follows that $\Gpuv=\{0\}$, where ``$0$'' is the zero-constant function.
        Therefore, $C=0$ and, by Convention \ref{conv}(\ref{conv:indefinite}), the quantities $A_1$--$A_4$ are also equal to zero; hence the theorem holds trivially. Thanks to this observation, we may and will assume in the proof that
        \begin{equation}\label{thm:HanickaToLambda:assumV}
                \int_{t}^L\frac{v(s)}{U^p(s)}\ds <\infty \quad \text{ for 
every } t\in(0,L).
        \end{equation}                     
        If $p>q$, set $r=\frac{pq}{p-q}$. For each $f\in\M_\mu (X)$ there exists a sequence $\{h_n\}_{n\in\N}$ of functions from $\Mpl(0,L)$ such that $\int_t^L h_n(s)\ds \nearrow f^*(t)$ for a.e.~$t\in(0,L)$ as $n\to\infty$. The proof of this statement 
is analogous to that of \cite[Lemma 1.2]{Sinnamon2003}.
        Furthermore, for any $t\in(0,L)$ and every $h\in\Mpl(0,L)$, we have
        \begin{equation*}
                \frac1{U(t)} \int_0^t u(y)\int_y^L h(s) \ds \dy \le 2 \int_0^L \frac{U(s) h(s)}{U(s)+U(t)} \ds \le \frac2{U(t)} \int_0^t u(y)\int_y^L h(s) \ds \dy.
        \end{equation*}
        Hence, by the monotone convergence theorem, we get (Convention \ref{conv}(\ref{conv:indefinite}) is in use here)
        \begin{equation}\label{thm:HanickaToLambda:equivalentexpression}
                C \approx \sup_{h\in\Mpl(0,L)} \frac{ \left(\int_0^L\left(\int_t^L h(s)\ds \right)^q w(t) \dt\right)^\frac1{q} } { \left(\int_0^L\left(\int_0^L\frac{U(s)h(s)}{U(s)+U(t)}\ds \right)^pv(t) \dt\right)^\frac1{p} }.
        \end{equation}
        Define
        \begin{equation*}
                \varphi(t)=\int_0^L\min\{U(t)^p, U(s)^p\}\frac{v(s)}{U(s)^p}\ds,\ t\in(0,L).
        \end{equation*}
        Note that $\varphi\in Q_{U^p}(0,L)$ (in particular, $\varphi$ is finite on $(0,L)$ by assumption \eqref{thm:HanickaToLambda:assumV}). Therefore, for every $a>1$ there exists a~covering sequence $\{x_k\}_{k\in\K^+_-}\in CS(\varphi, U^p, a)$. We fix $a>1$ sufficiently large so that the 
lemmas and theorems that we are to use below may be applied. An~appropriate value of $a$ may be determined by inspecting the further course of the 
proof in each of the cases (i)--(iv). In any of them, however, the sufficient size of the parameter $a$ depends only on $p$ and $q$.
       
        By Lemma~\ref{lem:disantidisint} with $\widetilde p=p$, $\widetilde h=\varphi$, $\widetilde\varrho=U^p$, $\widetilde f(t)=Uh$, $\widetilde\alpha=\widetilde\beta=0$, and $\d\widetilde \nu(t)=\frac{v(t)}{U^p(t)} \dt$ (the parameters with tildes are those from the lemma) we have
        \begin{align}
                \int_0^L\left(\int_0^L\frac{U(s)h(s)}{U(s)+U(t)}\ds \right)^p v(t) \dt
                &\approx\sum_{k\in\K_-^+}\varphi(x_k)\left(\int_0^L \frac{U(t)h(t)}{U(x_k)+U(t)} \dt\right)^p \notag\\
                &\approx\sum_{k\in\K^+}\left(\int_{x_{k-1}}^{x_k}\varphi^\frac1{p}(t)h(t) \dt\right)^p \label{thm:HanickaToLambda:disantidisRHS}
        \end{align}
        for each $h\in\Mpl(0, L)$. The equivalence constants depend only on $p$. Clearly,
        \begin{align}
                &\int_0^L\left(\int_t^L h(s)\ds \right)^qw(t) \dt=\sum_{k\in\K^+}\int_{x_{k-1}}^{x_k}\left(\int_t^L h(s)\ds \right)^q w(t) \dt \notag\\
                &\quad\approx \sum_{k\in\K^+}\int_{x_{k-1}}^{x_k}\left(\int_t^{x_k} h(s)\ds \right)^q w(t) \dt + \sum_{k\in\K}\left(\int_{x_k}^L h(s)\ds \right)^q\int_{x_{k-1}}^{x_k} w(t) \dt \label{thm:HanickaToLambda:disantidisLHS}
        \end{align}
        for each $h\in\Mpl(0, L)$, and the equivalence constants depend only on $q$.\\
       
        \emph{Upper bounds}.
        In this part, we shall prove the upper bounds on $C$. This is equivalent to proving that the upper bounds are upper bounds on the supremum on the right-hand side of \eqref{thm:HanickaToLambda:equivalentexpression}. As for cases (i) and (ii), assume that $1\le q<\infty$. For every $k\in\K^+$, the weighted Hardy inequality (e.g., \citep{OK:90} and references therein) yields
        \begin{equation}\label{thm:HanickaToLambda:hardygeq1}
                \int_{x_{k-1}}^{x_k}\left(\int_t^{x_k} h(s)\ds \right)^q w(t) \dt\lesssim\left(\int_{x_{k-1}}^{x_k}\varphi^\frac1{p}(t)h(t) \dt\right)^q\sup_{t\in(x_{k-1},x_k]}\varphi^{-\frac{q}{p}}(t)\int_{x_{k-1}}^t w(s)\ds .
        \end{equation}
       
        \emph{Case} (i).
        Assume that $1\leq q$, $p\le q$ and $A_1<\infty$. Then
        \begin{align}
                & \sum_{k\in\K^+}\int_{x_{k-1}}^{x_k}\left(\int_t^{x_k} h(s)\ds \right)^q w(t) \dt
                \lesssim\left(\sup_{t\in(0,L)}\varphi^{-\frac{q}{p}}(t)W(t)\right)\sum_{k\in\K^+}\left(\int_{x_{k-1}}^{x_k}\varphi^\frac1{p}(t)h(t) \dt\right)^q \notag\\
                &\qquad\le A_1^{q}\left(\sum_{k\in\K^+}\left(\int_{x_{k-1}}^{x_k}\varphi^\frac1{p}(t)h(t) \dt\right)^p\right)^\frac{q}{p}
                \approx A_1^{q}\left(\int_0^L\left(\int_0^L\frac{U(s)h(s)}{U(s)+U(t)}\ds \right)^p v(t) \dt\right)^\frac{q}{p}. \label{thm:HanickaToLambda:suffAI}
        \end{align}
        The first inequality in \eqref{thm:HanickaToLambda:suffAI} follows from \eqref{thm:HanickaToLambda:hardygeq1}, the second inequality is valid since $p\le q$, and the equivalence is valid thanks to \eqref{thm:HanickaToLambda:disantidisRHS}. Furthermore, using $p\le q$, we get
        \begin{align}
                \sum_{k\in\K}\left(\int_{x_k}^L h(s)\ds \right)^q\int_{x_{k-1}}^{x_k} w(t) \dt
                &\le\sum_{k\in\K}\left(\int_{x_k}^L h(s)\ds \right)^q\varphi^\frac{q}{p}(x_k)\varphi^{-\frac{q}{p}}(x_k)W(x_k) \notag\\
                &\le\sup_{k\in\K}\left(\varphi^{-\frac{q}{p}}(x_k)W(x_k)\right)\left(\sum_{k\in\K}\left(\int_{x_k}^L h(s)\ds \right)^p\varphi(x_k)\right)^\frac{q}{p} \notag\\
                &\lesssim A_1^{q}\left(\sum_{k\in\K}\left(\int_{x_k}^L \frac{U(s)h(s)}{U(x_k)+U(s)}\ds \right)^p\varphi(x_k)\right)^\frac{q}{p} \notag\\
                &\lesssim A_1^{q}\left(\int_0^L\left(\int_0^L\frac{U(s)h(s)}{U(s)+U(t)}\ds \right)^p v(t) \dt\right)^\frac{q}{p}, \label{thm:HanickaToLambda:suffAII}
        \end{align}
        where the last inequality follows from \eqref{thm:HanickaToLambda:disantidisRHS}. Note that \eqref{thm:HanickaToLambda:suffAII} is actually valid for any $q\in(0,\infty)$ such that $p\leq q$. By combining \eqref{thm:HanickaToLambda:disantidisLHS}, \eqref{thm:HanickaToLambda:suffAI}, \eqref{thm:HanickaToLambda:suffAII}, and considering \eqref{thm:HanickaToLambda:equivalentexpression}, we obtain the estimate $C\lesssim A_1$ in case (i).
       
        \emph{Case} (ii). Assume that $1\le q < p< \infty$ and $A_2<\infty$. Owing to the H\"older  inequality with exponents $\frac{p}{q}$ and $\frac{p}{p-q}$, we obtain
        \begin{align}
                &\sum_{k\in\K^+}\left(\int_{x_{k-1}}^{x_k}\varphi^\frac1{p}(t)h(t) \dt\right)^q\sup_{t\in(x_{k-1},x_k]}\varphi^{-\frac{q}{p}}(t) \int_{x_{k-1}}^t w(s)\ds \notag\\
                &\quad\le\left(\sum_{k\in\K^+}\left(\int_{x_{k-1}}^{x_k}\varphi^\frac1{p}(t)h(t) \dt\right)^p\right)^\frac{q}{p} \left(\sum_{k\in\K^+}\sup_{t\in(x_{k-1},x_k]}\varphi^{-\frac{r}{p}}(t)\left(\int_{x_{k-1}}^t w(s)\ds \right)^\frac{r}{q} \right)^\frac{p-q}{p} \notag\\
                &\quad\le\left(\sum_{k\in\K^+}\left(\int_{x_{k-1}}^{x_k}\varphi^\frac1{p}(t)h(t) \dt\right)^p\right)^\frac{q}{p} \left(\sum_{k\in\K^+}\sup_{t\in(x_{k-1},x_k]}\left(\frac{U^p(t)}{\varphi(t)}\right)^{\frac{r}{p}}U^{-r}(t)W(t)^\frac{r}{q} \right)^\frac{p-q}{p}. \label{thm:HanickaToLambda:suffBIeq1}
        \end{align}
        By \citep[Theorem~2.4.4]{EGO:18}, the equivalence
        \begin{equation}\label{thm:HanickaToLambda:suffBIeq3}
                \begin{aligned}
                        \left(\frac{U^p(t)}{\varphi(t)}\right)^{\frac{r}{p}}
                        &\approx\left(\lim_{s\to0^+}\frac{U^p(s)}{\varphi(s)}\right)^\frac{r}{p}+\left(\lim_{s\to L^-}\frac1{\varphi(s)}\right)^\frac{r}{p}U^r(t)\\
                        &\quad+\int_0^L\min\{U^r(t),U^r(s)\} \frac{U^{p-1}(s)u(s)V(s)} {\varphi^{\frac{r}{p}+2}(s)} \int_s^L\frac{v(\tau)}{U^p(\tau)}\,\d\tau\ds
                \end{aligned}
        \end{equation}
        is valid for every $t\in(0,L)$, and the equivalence constants depend only on $p$ and $q$. Note that \eqref{thm:HanickaToLambda:suffBIeq3} is actually valid for any $q\in(0,\infty)$ such that $q<p$. Lemma~\ref{lem:disantidissup} with the setting $\widetilde h=\left(\frac{U^p}{\varphi}\right)^{\frac{r}{p}}$, $\widetilde\varrho=U^r$, $\widetilde p=1$, $\widetilde f =W^\frac{r}{q}$, together with \eqref{thm:HanickaToLambda:suffBIeq3} gives
        \begin{equation}\label{thm:HanickaToLambda:suffBIeq4}
                \sum_{k\in\K^+}\sup_{t\in(x_{k-1},x_k]}\left(\frac{U^p(t)}{\varphi(t)}\right)^{\frac{r}{p}}U^{-r}(t)W(t)^\frac{r}{q}\approx A_2^{r}.
        \end{equation}
        By using \eqref{thm:HanickaToLambda:suffBIeq1}, \eqref{thm:HanickaToLambda:suffBIeq4} and \eqref{thm:HanickaToLambda:disantidisRHS} we obtain
        \begin{align}
                &\sum_{k\in\K^+}\left(\int_{x_{k-1}}^{x_k}\varphi^\frac1{p}(t)h(t) \dt\right)^q\sup_{t\in(x_{k-1},x_k]}\varphi^{-\frac{q}{p}}(t)\int_{x_{k-1}}^t w(s)\ds \notag \\
                &\quad\lesssim A_2^{q}\left(\int_0^L\left(\int_0^L\frac{U(s)h(s)}{U(s)+U(t)}\ds \right)^p v(t) \dt\right)^\frac{q}{p}. \label{thm:HanickaToLambda:suffBIeq5}
        \end{align}
        Next, one has
        \begin{align}
                &\sum_{k\in\K}\left(\int_{x_k}^L h(s)\ds \right)^q\int_{x_{k-1}}^{x_k} w(t) \dt \notag\\
                &\quad \le \sum_{k\in\K}\left(\int_{x_k}^L h(s)\ds \right)^q\varphi^\frac{q}{p}(x_k)\varphi^{-\frac{q}{p}}(x_k)W(x_k) \notag\\
                &\quad \le \left(\sum_{k\in\K}\left(\int_{x_k}^L h(s)\ds \right)^p\varphi(x_k)\right)^\frac{q}{p} \left(\sum_{k\in\K}\varphi^{-\frac{r}{p}}(x_k)W^\frac{r}{q}(x_k)\right)^\frac{q}{r} \notag\\
                &\quad =\left(\sum_{k\in\K}\left(\sum_{\substack{l\in\Z\\ k\le l\le K^+-1}}\int_{x_l}^{x_{l+1}} h(s)\ds \right)^p\varphi(x_k)\right)^\frac{q}{p} \left(\sum_{k\in\K}\varphi^{-\frac{r}{p}}(x_k)W^\frac{r}{q}(x_k)\right)^\frac{q}{r} \notag\\
                &\quad \approx\left(\sum_{k\in\K}\left(\int_{x_k}^{x_{k+1}} h(s)\ds \right)^p\varphi(x_k)\right)^\frac{q}{p} \left(\sum_{k\in\K}\varphi^{-\frac{r}{p}}(x_k)W^\frac{r}{q}(x_k)\right)^\frac{q}{r} \notag\\
                &\quad \le\left(\sum_{k\in\K}\left(\int_{x_k}^{x_{k+1}} \varphi^\frac1{p}(s)h(s)\ds \right)^p\right)^\frac{q}{p} \left(\sum_{k\in\K}\varphi^{-\frac{r}{p}}(x_k)W^\frac{r}{q}(x_k)\right)^\frac{q}{r} \notag\\
                &\quad \lesssim \left(\int_0^L\left(\int_0^L\frac{U(s)h(s)}{U(s)+U(t)}\ds \right)^p v(t) \dt\right)^\frac{q}{p} \left(\sum_{k\in\K}\varphi^{-\frac{r}{p}}(x_k)W^\frac{r}{q}(x_k)\right)^\frac{q}{r}. \label{thm:HanickaToLambda:suffBIeq6}
        \end{align}
        Here, the H\"older inequality was applied in the second step, the 
fourth step relies on \eqref{prel:CSgeomincr} and \citep[Lemma~1.3.5]{EGO:18}, and the last step follows from \eqref{thm:HanickaToLambda:disantidisRHS}.
        Note that \eqref{thm:HanickaToLambda:suffBIeq6} is actually valid 
for all $0<q<p<\infty$, although we are currently assuming $1\le q < p <\infty$.
        Since
        \begin{equation*}
                W(x_k)\le U^q(x_k)\sup_{t\in(x_k,L)}U^{-q}(t)W(t)
        \end{equation*}
        holds for each $k\in\K$, by \eqref{thm:HanickaToLambda:suffBIeq5} 
and \eqref{thm:HanickaToLambda:suffBIeq6} we get
        \begin{equation}\label{thm:HanickaToLambda:suffBIeq7}
                \sum_{k\in\K}\left(\int_{x_k}^L h(s)\ds \right)^q\int_{x_{k-1}}^{x_k} w(t) \dt\lesssim A_2^{q}\left(\int_0^L\left(\int_0^L\frac{U(s)h(s)}{U(s)+U(t)}\ds \right)^p v(t) \dt\right)^\frac{q}{p}.
        \end{equation}
        Using \eqref{thm:HanickaToLambda:disantidisLHS}, \eqref{thm:HanickaToLambda:hardygeq1}, \eqref{thm:HanickaToLambda:suffBIeq5}, \eqref{thm:HanickaToLambda:suffBIeq7}  and considering \eqref{thm:HanickaToLambda:equivalentexpression}, we obtain the estimate $C\lesssim A_2$ in case (ii).
       
        As for cases (iii) and (iv), assume that  $0 < q < 1$. One can easily modify \citep[Theorem~3.3]{SS:96} to obtain
        \begin{align}
                &\int_{x_{k-1}}^{x_k}\left(\int_t^{x_k}h(s)\ds \right)^q w(t) \dt \notag\\
                &\quad\lesssim\left(\int_{x_{k-1}}^{x_k}h(t)\varphi^\frac1{p}(t) \dt\right)^q\left(\int_{x_{k-1}}^{x_k}\left(\int_{x_k}^tw(s)\ds \right)^\frac{q}{1-q}w(t)\varphi^{-\frac{q}{p(1-q)}}(t) \dt\right)^{1-q} \notag\\
                &\quad\le\left(\int_{x_{k-1}}^{x_k}h(t)\varphi^\frac1{p}(t) \dt\right)^q\left(\int_{x_{k-1}}^{x_k}W^\frac{q}{1-q}(t)w(t)\varphi^{-\frac{q}{p(1-q)}}(t) \dt\right)^{1-q} \label{thm:HanickaToLambda:suffCIeq8}
        \end{align}
        for every $k\in\K^+$, where the constant in ``$\lesssim$'' depends only on $q$.
        By Lemma~\ref{lem:discantisup} with the setting $\widetilde\varphi(t)=U^q\varphi^{-\frac{q}{p}}$, $\widetilde f= W^\frac{q}{1-q}w$, $\widetilde p=1-q$, $\widetilde\varrho=U^q$, and by Lemma~\ref{lemma:changeofvariable} one has
        \begin{align}
                &\sup_{k\in\K^+}\left(\int_{x_{k-1}}^{x_k}W^\frac{q}{1-q}(t)w(t)\varphi^{-\frac{q}{p(1-q)}}(t) \dt\right)^{1-q} \notag\\
                &\quad\approx\sup_{t\in(0,L)} \frac{\left(\int_0^L W^\frac{q}{1-q}(s)w(s) \min\left\{1, \left( \frac{U(t)}{U(s)} \right)^\frac{q}{1-q} \right\}\ds \right)^{1-q}}{\varphi^\frac{q}{p}(t)}
                \approx A_3^{q}. \label{thm:HanickaToLambda:suffCIeq9}
        \end{align}
       
        \emph{Case} (iii). Assume that $0<p\le q<1$ and $A_3<\infty$. Thanks to \eqref{thm:HanickaToLambda:disantidisRHS}, \eqref{thm:HanickaToLambda:suffCIeq8} and \eqref{thm:HanickaToLambda:suffCIeq9}, we 
have
        \begin{align}
                &\sum_{k\in\K^+}\int_{x_{k-1}}^{x_k}\left(\int_t^{x_k} h(s)\ds \right)^q w(t) \dt \notag\\
                &\quad\lesssim\left(\sum_{k\in\K^+}\left(\int_{x_{k-1}}^{x_k}h(t)\varphi^\frac1{p}(t) \dt\right)^q\right)\left(\sup_{k\in\K^+} \left(\int_{x_{k-1}}^{x_k}W^\frac{q}{1-q}(t)w(t)\varphi^{-\frac{q}{p(1-q)}}(t) \dt\right)^{1-q}\right) \notag\\
                &\quad\lesssim A_3^{q}\left(\int_0^L\left(\int_0^L\frac{U(s)h(s)}{U(s)+U(t)}\ds \right)^p v(t) \dt\right)^\frac{q}{p}. \label{YippeeKiYay}
        \end{align}
        Since $A_1\le A_3$, it follows from \eqref{thm:HanickaToLambda:suffAII} that
        \begin{equation}\label{thm:HanickaToLambda:suffCIeq10}
                \sum_{k\in\K}\left(\int_{x_k}^L h(s)\ds \right)^q\int_{x_{k-1}}^{x_k} w(t) \dt\lesssim A_3^{q}\left(\int_0^L\left(\int_0^L\frac{U(s)h(s)}{U(s)+U(t)}\ds \right)^p v(t) \dt\right)^\frac{q}{p}.
        \end{equation}
        Hence, starting with \eqref{thm:HanickaToLambda:equivalentexpression} and using \eqref{thm:HanickaToLambda:disantidisLHS}, \eqref{YippeeKiYay} and \eqref{thm:HanickaToLambda:suffCIeq10}, we obtain $C\lesssim A_3$ in case (iii).
       
        \emph{Case} (iv). Assume that $0<q<1$, $0<q<p$ and $A_4<\infty$.
        Denote
        \begin{equation}\label{thm:HanickaToLambda:defxialt}
                \xi(t)=\int_0^L \min\left\{U^\frac{q}{1-q}(s), U^\frac{q}{1-q}(t) \right\} W^\frac{q}{1-q}(s)w(s)U^{-\frac{q}{1-q}}(s) \ds,\quad 
t\in(0,L).
        \end{equation}
        By Lemma~\ref{lemma:changeofvariable}, we have
        \begin{equation}\label{thm:HanickaToLambda:defxi}
                \xi(t)\approx W^\frac1{1-q}(t) + U^\frac{q}{1-q}(t)\int_t^LW^\frac{q}{1-q}(s)w(s)U^{-\frac{q}{1-q}}(s)\ds \quad\text{for every } t\in(0,L).
        \end{equation}
        Thanks to \eqref{thm:HanickaToLambda:suffCIeq8}, the H\"{o}lder inequality with exponents $\frac{p}{q}$ and $\frac{r}{q}$, and \eqref{thm:HanickaToLambda:disantidisRHS}, we obtain
        \begin{align}
                &\sum_{k\in\K^+}\int_{x_{k-1}}^{x_k}\left(\int_t^{x_k} h(s)\ds \right)^q w(t) \dt \notag\\
                &\quad\le\left(\sum_{k\in\K^+}\left(\int_{x_{k-1}}^{x_k}h(t)\varphi^\frac1{p}(t) \dt\right)^p\right)^\frac{q}{p}\left(\sum_{k\in\K^+}\left(\int_{x_{k-1}}^{x_k}W^\frac{q}{1-q}(t)w(t)\varphi^{-\frac{q}{p(1-q)}}(t) \dt\right)^\frac{r(1-q)}{q}\right)^\frac{q}{r} \notag\\
                &\quad\approx\left(\int_0^L\left(\int_0^L\frac{U(s)h(s)}{U(s)+U(t)}\ds \right)^p v(t) \dt\right)^\frac{q}{p}\left(\sum_{k\in\K^+}\left(\int_{x_{k-1}}^{x_k}W^\frac{q}{1-q}(t)w(t)\varphi^{-\frac{q}{p(1-q)}}(t) \dt\right)^\frac{r(1-q)}{q}\right)^\frac{q}{r}. \label{thm:HanickaToLambda:suffDIeq10}
        \end{align}
        Furthermore, Lemma~\ref{lem:disantidisint} with $\widetilde h(t)=\frac{U(t)^r}{\varphi^\frac{r}{p}(t)}$, $\widetilde\varrho = U^r$, $\widetilde p=\frac{r(1-q)}{q}$ and $\widetilde f=W^\frac{q}{1-q}w$ gives
        \begin{equation}\label{thm:HanickaToLambda:suffDIeq11}
                \left(\sum_{k\in\K^+}\left(\int_{x_{k-1}}^{x_k}W^\frac{q}{1-q}(t)w(t)\varphi^{-\frac{q}{p(1-q)}}(t) \dt\right)^\frac{r(1-q)}{q}\right)^\frac{q}{r}\approx\left(\sum_{k\in\K_-^+}\frac{\xi^\frac{r(1-q)}{q}(x_k)}{\varphi^\frac{r}{p}(x_k)}\right)^\frac{q}{r}.
        \end{equation}
        Recall that \eqref{thm:HanickaToLambda:suffBIeq6} is valid for any $0<q<p<\infty$ and note that $W^\frac{r}{q}(t)\lesssim\xi^\frac{r(1-q)}{q}(t)$ for every $t\in(0,L)$.
        Therefore, \eqref{thm:HanickaToLambda:disantidisLHS}, \eqref{thm:HanickaToLambda:suffDIeq10}, \eqref{thm:HanickaToLambda:suffDIeq11} and \eqref{thm:HanickaToLambda:suffBIeq6} yield
        \begin{equation}\label{thm:HanickaToLambda:suffDIeq12}
                \left(\int_0^L\left(\int_t^L h(s)\ds \right)^qw(t) \dt\right)^\frac1{q}\lesssim\left(\sum_{k\in\K_-^+}\frac{\xi^\frac{r(1-q)}{q}(x_k)}{\varphi^\frac{r}{p}(x_k)}\right)^\frac1{r}\left(\int_0^L\left(\int_0^L\frac{U(s)h(s)}{U(s)+U(t)}\ds \right)^p v(t) \dt\right)^\frac1{p}.
        \end{equation}
        Combining \eqref{thm:HanickaToLambda:suffBIeq3} and Lemma~\ref{lem:disantidisint} with $\widetilde h=U^r\varphi^{-\frac{r}{p}}$, $\widetilde\varrho=U^r$, $\widetilde p=\frac{r(1-q)}{q}$, $\widetilde f=W^\frac{q}{1-q}w$, we obtain       
        \begin{equation}\label{thm:HanickaToLambda:suffDIeq13}
                \left(\sum_{k\in\K_-^+}\frac{\xi^\frac{r(1-q)}{q}(x_k)}{\varphi^\frac{r}{p}(x_k)}\right)^\frac1{r}\approx A_4.
        \end{equation}
        Hence, the desired upper bound $C\lesssim A_4$ follows from \eqref{thm:HanickaToLambda:suffDIeq12}, \eqref{thm:HanickaToLambda:suffDIeq13} 
and \eqref{thm:HanickaToLambda:equivalentexpression}.
        \\
							
        \emph{Lower bounds.} Now we shall turn our attention to proving the lower bounds. Suppose that $C<\infty$. Fix an~arbitrary $t\in(0,L)$ and choose any function $f\in\M_\mu(X)$ such that
        \begin{equation*}
                f^* = \frac{\chi_{[0,t)}} {\left(V(t)+U^p(t)\int_t^Lv(s)U^{-p}(s)\ds \right)^\frac1{p}}.
        \end{equation*}
        Such a function indeed exists, see \cite[Corollary 7.8, p.~86]{BS}.
        Observe that $\|f\|_{\Gpuv}= 1$, and so
        \begin{equation*}
                \frac{W^\frac1{q}(t)}{\left(V(t)+U^p(t)\int_t^Lv(s)U^{-p}(s)\ds \right)^\frac1{p}} \le C
        \end{equation*}
        because the left-hand side is equal to $\|f\|_{\Lambda^{q}(w)}$.
        Since $t$ was arbitrary, we get the estimate $A_1 \le C$ by taking the supremum over $t\in(0,L)$.
        Notice that no additional assumptions on $p$ or $q$ were needed. Hence, not only does this complete case (i), but it also shows that $A_1\le 
C$ in all cases (i)--(iv). This is a~common feature of inequalities of this type.       
       
        Let us continue with the other cases.
        Thanks to \eqref{thm:HanickaToLambda:equivalentexpression}, \eqref{thm:HanickaToLambda:disantidisLHS} and \eqref{thm:HanickaToLambda:disantidisRHS}, we have
        \begin{equation}\label{thm:HanickaToLambda:necesseq}
                \left(\sum_{k\in\K^+}\int_{x_{k-1}}^{x_k}\left(\int_t^{x_k} h(s)\ds \right)^q w(t) \dt\right)^\frac1{q}\lesssim C \left(\sum_{k\in\K^+}\left(\int_{x_{k-1}}^{x_k}\varphi^\frac1{p}(t)h(t) \dt\right)^p\right)^\frac1{p}
        \end{equation}
        for every $h\in\Mpl(0, L)$.
       
        Exploiting the saturation of the Hardy inequality (see \cite[Lemma 5.4]{OK:90} and \cite[Theorem 3.3]{SS:96}) and \eqref{thm:HanickaToLambda:necesseq}, by the same argument as in \citep[pages 340--344]{GP:03} we 
obtain the following estimates:
				\begin{itemize}
        \item If $1\le q<\infty$, $p > q$, then
        \begin{equation} \label{thm:HanickaToLambda:necessB} 
                \left(\sum_{k\in\K^+}\left(\sup_{t\in(x_{k-1},x_k]}\varphi^{-\frac{q}{p}}(t)\int_{x_{k-1}}^tw(s)\ds \right)^\frac{r}{q}\right)^\frac1{r}\lesssim C;
        \end{equation}       
        \item If $0<q<1$, $p\le q$, then
        \begin{equation}\label{thm:HanickaToLambda:necessC}       
                \sup_{k\in\K^+}\left(\int_{x_{k-1}}^{x_k}\left(\int_{x_{k-1}}^tw(s)\ds \right)^\frac{q}{1-q}w(t)\varphi^{-\frac{q}{p(1-q)}}(t) \dt\right)^\frac{1-q}{q}\lesssim C;
        \end{equation}
        \item If $0<q<1$, $p > q$, then
        \begin{equation}\label{thm:HanickaToLambda:necessD}
                \left(\sum_{k\in\K^+}\left(\int_{x_{k-1}}^{x_k}\left(\int_{x_{k-1}}^tw(s)\ds \right)^\frac{q}{1-q}w(t)\varphi^{-\frac{q}{p(1-q)}}(t) \dt\right)^\frac{(1-q)r}{q}\right)^\frac1{r}\lesssim C.
        \end{equation}
				\end{itemize}
       
        \emph{Case} (ii). Assume that $1\le q < p< \infty$.
        We have
        \begin{align*}
                A_2 & \approx\left(\sum_{k\in\K^+}\sup_{t\in(x_{k-1},x_k]}\frac{W^\frac{r}{q}(t)}{\varphi^\frac{r}{p}(t)}\right)^\frac1{r}\\
                &\lesssim\left(\sum_{k=K_-+2}^{K^+}\frac{W^\frac{r}{q}(x_{k-1})}{\varphi^\frac{r}{p}(x_{k-1})}\right)^\frac1{r} + \left(\sum_{k\in\K^+}\sup_{t\in(x_{k-1},x_k]}\frac{\left(\int_{x_{k-1}}^tw(s)\ds \right)^\frac{r}{q}}{\varphi^\frac{r}{p}(t)}\right)^\frac1{r}\\
                &=\left(\sum_{k\in\K}\frac{\left(\sum_{l=K_-+1}^k\int_{x_{l-1}}^{x_l}w(s)\ds \right)^\frac{r}{q}}{\varphi^\frac{r}{p}(x_k)}\right)^\frac1{r} + \left(\sum_{k\in\K^+}\left(\sup_{t\in(x_{k-1},x_k]}\frac{\int_{x_{k-1}}^tw(s)\ds }{\varphi^\frac{q}{p}(t)}\right)^\frac{r}{q}\right)^\frac1{r}\\
                &\approx\left(\sum_{k\in\K}\left(\frac{\int_{x_{k-1}}^{x_k}w(s)\ds }{\varphi^\frac{q}{p}(x_k)}\right)^\frac{r}{q}\right)^\frac1{r} 
+ \left(\sum_{k\in\K^+}\left(\sup_{t\in(x_{k-1},x_k]}\frac{\int_{x_{k-1}}^tw(s)\ds }{\varphi^\frac{q}{p}(t)}\right)^\frac{r}{q}\right)^\frac1{r}\\
                &\lesssim C.
        \end{align*}
        Here, the first and last step are based on \eqref{thm:HanickaToLambda:suffBIeq4} and \eqref{thm:HanickaToLambda:necessB}, respectively. The fourth step follows from \cite[Lemma~1.3.4]{EGO:18} combined with \eqref{prel:CSgeomincr}.
       
        \emph{Case} (iii). Let $0<p\le q<1$.
        Owing to \eqref{thm:HanickaToLambda:suffCIeq9}, we have
        \begin{align}
                A_3 & \approx \sup_{k\in\K^+}\left(\int_{x_{k-1}}^{x_k}W^\frac{q}{1-q}(t)w(t)\varphi^{-\frac{q}{p(1-q)}}(t) \dt\right)^\frac{1-q}{q} \notag \\
                &\approx\sup_{\substack{K_-+2\le k \le K^+\\k\in\Z}}W(x_{k-1})\left(\int_{x_{k-1}}^{x_k}w(t)\varphi^{-\frac{q}{p(1-q)}}(t) \dt\right)^\frac{1-q}{q} \notag \\
                &\quad+ \sup_{k\in\K^+}\left(\int_{x_{k-1}}^{x_k}\left(\int_{x_{k-1}}^tw(s)\ds \right)^\frac{q}{1-q}w(t)\varphi^{-\frac{q}{p(1-q)}}(t) \dt\right)^\frac{1-q}{q}. \label{thm:HanickaToLambda:necessCeq1}
        \end{align}
        As for the first term, one may use exactly the same argument as in \cite[page~343]{GP:03} to obtain
        \begin{align}
                &\sup_{\substack{K_-+2\le k \le K^+\\k\in\Z}}W(x_{k-1})\left(\int_{x_{k-1}}^{x_k}w(t)\varphi^{-\frac{q}{p(1-q)}}(t) \dt\right)^\frac{1-q}{q} \notag\\
                &\qquad\lesssim\sup_{k\in\K}\varphi^{-\frac{1}{p}}(x_k)W^\frac1{q}(x_k) \notag \\
                &\qquad\quad+ \sup_{\substack{K_-+2\le k \le K^+\\k\in\Z}}\left(\int_{x_{k-1}}^{x_k}\left(\int_{x_{k-1}}^tw(s)\ds \right)^\frac{q}{1-q}w(t)\varphi^{-\frac{q}{p(1-q)}}(t) \dt\right)^\frac{1-q}{q}. \label{thm:HanickaToLambda:necessCeq2}
        \end{align}
        Moreover,
        \begin{align}
                \sup_{k\in\K}\varphi^{-\frac{1}{p}}(x_k)W^\frac1{q}(x_k) & \approx\sup_{k\in\K}\varphi^{-\frac{1}{p}}(x_k)\left(\int_{x_{k-1}}^{x_k}w(t) \dt\right)^\frac1{q} \notag\\
                &\approx\sup_{k\in\K}\varphi^{-\frac{1}{p}}(x_k)\left(\int_{x_{k-1}}^{x_k}\left(\int_{x_{k-1}}^tw(s)\ds \right)^\frac{q}{1-q}w(t) \dt\right)^\frac{1-q}{q} \notag\\
                &\le\sup_{k\in\K}\left(\int_{x_{k-1}}^{x_k}\left(\int_{x_{k-1}}^tw(s)\ds \right)^\frac{q}{1-q}w(t)\varphi^{-\frac{q}{p(1-q)}}(t) \dt\right)^\frac{1-q}{q}, \label{thm:HanickaToLambda:necessCeq3}
        \end{align}
        where the first equivalence is valid thanks to \cite[Lemma~1.3.4]{EGO:18} again, and the second one follows from Lemma~\ref{lemma:changeofvariable}. Hence, combining \eqref{thm:HanickaToLambda:necessCeq1}, \eqref{thm:HanickaToLambda:necessCeq2} and \eqref{thm:HanickaToLambda:necessCeq3} with \eqref{thm:HanickaToLambda:necessC}, we get
        $A_3\lesssim C.$
       
        \emph{Case} (iv). Let $0<q<1$, $0<q<p$. Similarly to the previous 
case, it can be shown (cf.~\cite[pages 344--346]{GP:03}) that
        \begin{align}
                &\left(\sum_{k\in\K^+}\left(\int_{x_{k-1}}^{x_k}W^\frac{q}{1-q}(t)w(t)\varphi^{-\frac{q}{p(1-q)}}(t) \dt\right)^\frac{(1-q)r}{q}\right)^\frac1{r} \notag\\
                &\quad\approx\left(\sum_{k=K_-+2}^{K^+}W^r(x_{k-1})\left(\int_{x_{k-1}}^{x_k}w(t)\varphi^{-\frac{q}{p(1-q)}}(t) \dt\right)^\frac{(1-q)r}{q}\right)^\frac1{r} \notag\\
                &\quad\quad+\left(\sum_{k\in\K^+}\left(\int_{x_{k-1}}^{x_k}\left(\int_{x_{k-1}}^tw(s)\ds \right)^\frac{q}{1-q}w(t)\varphi^{-\frac{q}{p(1-q)}}(t) \dt\right)^\frac{(1-q)r}{q}\right)^\frac1{r} \notag\\
                &\quad\approx\left(\sum_{k\in\K^+}\left(\int_{x_{k-1}}^{x_k}\left(\int_{x_{k-1}}^tw(s)\ds \right)^\frac{q}{1-q}w(t)\varphi^{-\frac{q}{p(1-q)}}(t) \dt\right)^\frac{(1-q)r}{q}\right)^\frac1{r}. \label{thm:HanickaToLambda:necessDeq1}
        \end{align}
        Hence, the desired inequality $A_4\lesssim C$ follows from \eqref{thm:HanickaToLambda:necessDeq1}, \eqref{thm:HanickaToLambda:suffDIeq11}, 
\eqref{thm:HanickaToLambda:suffDIeq13} and \eqref{thm:HanickaToLambda:necessD}.
\end{proof}

\begin{remark}\label{rem:alternativeexpressionofA4}
        Keeping the setting of Theorem~\ref{thm:HanickaToLambda}, we make 
the following remark. If
        \begin{equation}\label{rem:HanickaToLambda:assumW}
                \int_{t}^LW^\frac{q}{1-q}(s)w(s)U^{-\frac{q}{1-q}}(s)\ds  
< \infty \quad \text{ for every } t\in(0,L),
        \end{equation}
        then $A_4$ (and so also $C$) is equivalent to $A_5$, where
        \begin{equation*}
                A_5 = \left(\int_0^L\frac{\left(W^\frac{1}{1-q}(t)+U^\frac{q}{1-q}(t)\int_t^LW^\frac{q}{1-q}(s)w(s)U^{-\frac{q}{1-q}}(s)\ds \right)^{\frac{p(1-q)}{p-q}-1}W^\frac{q}{1-q}(t)w(t)}{\left(V(t)+U^p(t)\int_t^Lv(s)U^{-p}(s)\ds \right)^\frac{q}{p-q}} \dt\right)^\frac{p-q}{pq}.
        \end{equation*}
        We shall prove this assertion. Note that assumption \eqref{rem:HanickaToLambda:assumW} implies that the function $\xi$ defined by \eqref{thm:HanickaToLambda:defxialt} is finite and, moreover, $\xi\in Q_{U^\frac{q}{1-q}}(0,L)$. Therefore, there is a~covering sequence $\{\widetilde{x}_k\}_{k\in\widetilde{\K}^+_-}\in CS(\xi, U^\frac{q}{1-q}, b)$ for each parameter $b>1$. We find $b$ sufficiently large so that the assumptions of the theorems that we are to use are satisfied. The sufficient size of $b$ depends only on $p$ and $q$. Combining Theorem~\ref{thm:disantidis} applied to $\widetilde h=\xi$, $\widetilde\varrho=U^\frac{q}{1-q}$, $\d\widetilde\nu(t)=W^\frac{q}{1-q}(t)w(t)U^{-\frac{q}{1-q}}(t)\dt$, $\widetilde\alpha=\widetilde\beta=0$, $\widetilde p=\frac{r(1-q)}{q}$, $\widetilde f=U^r\varphi^{-\frac{r}{p}}$ with \eqref{thm:HanickaToLambda:defxi}, we obtain
        \begin{equation}\label{rem:HanickaToLambda:eq1}
                \sum_{k\in\widetilde{\K}_-^+}\frac{\xi^\frac{r(1-q)}{q}(\widetilde{x}_k)}{\varphi^\frac{r}{p}(\widetilde{x}_k)}\approx A_5^{r}.
        \end{equation}
        Furthermore, since both functions $\xi^\frac{r(1-q)}{q}$ and $U^r\varphi^{-\frac{r}{p}}$ are $U^r$-quasiconcave on $(0,L)$, we have
        \begin{equation}\label{rem:HanickaToLambda:eq2}
                \sum_{k\in\K_-^+}\frac{\xi^\frac{r(1-q)}{q}(x_k)}{\varphi^\frac{r}{p}(x_k)}\approx\sum_{k\in\widetilde{\K}_-^+}\frac{\xi^\frac{r(1-q)}{q}(\widetilde{x}_k)}{\varphi^\frac{r}{p}(\widetilde{x}_k)}
        \end{equation}
        thanks to \cite[Lemma~4.2.9]{EGO:18}, where $\{x_k\}_{k\in\K^+_-}$ is the covering sequence from the proof of Theorem~\ref{thm:HanickaToLambda}. Hence the desired equivalence follows from \eqref{thm:HanickaToLambda:suffDIeq13} combined with \eqref{rem:HanickaToLambda:eq1} and \eqref{rem:HanickaToLambda:eq2}.
       
        Without the additional assumption, $A_4$ and $A_5$ need not, however, be equivalent. In order to see this, note that $\xi\equiv\infty$ if \eqref{rem:HanickaToLambda:assumW} is violated. If this is the case, then 
$A_4=\infty$ provided that \eqref{thm:HanickaToLambda:assumV} is true, but $A_5=0$ provided that $\frac{p(1-q)}{p-q}-1<0$ (which is the case when $0<q<1<p<\infty$). It appears that this peculiar detail was overlooked in \cite[Theorem~4.2]{GP:03}.
\end{remark}

The final theorem, which generalizes \citep[Theorem~1.8]{GP:06} by allowing degenerated weights, deals with a~variant of the main result in the setting $p=\infty$. It provides an equivalent estimate on the optimal constant $C$ in the inequality
\begin{equation*}
                \left( \int_0^L (\f(t))^q w(t)\dt \right)^\frac 1q \le C \esssup_{t\in(0,L)} \left(\frac 1{U(t)} \int_0^t \f(s) u(s) \ds\right) v(t),
\end{equation*}
which is expressed by \eqref{thm:HanickaToLambdaweakdefC} below. We note that the representation \eqref{thm:HanickaToLambdaweakreprephi} below is always possible (see~Remark~\ref{rem:HanickaToLambdaweak}).
\begin{theorem}\label{thm:HanickaToLambdaweak}
        Let $q\in(0,\infty)$. Let $v,w$ be weights on $(0,L)$ and $u$ an a.e.~positive weight on $(0,L)$. Set
        \begin{equation}\label{thm:HanickaToLambdaweakdefC}
                C = \sup_{\|f\|_{\Gpuvweak}\le1}\|f\|_{\Lambda^{q}(w)},
        \end{equation}
        and
        \begin{equation}\label{thm:HanickaToLambdaweak:defphi}
                \varphi(t)=\esssup_{\tau\in(0,t)}U(\tau)\esssup_{s\in(\tau,L)}\frac{v(s)}{U(s)},\ t\in(0,L).
        \end{equation}
        Let $B_1, B_2\in(0,\infty)$, $\gamma, \delta\in[0,\infty)$ and $\nu$ a nonnegative Borel measure on $(0,L)$ such that
        \begin{equation}\label{thm:HanickaToLambdaweakreprephi}
                B_1\varphi(t)\le\gamma + \delta U(t) + \int_{(0,L)}\min\{U(t), U(s)\}\dnu(s)\le B_2\varphi(t)\quad\text{for every $t\in(0,L)$}.
        \end{equation}
        \begin{enumerate}[(i)]
                \item If $1\le q<\infty$, then $C\approx A_6$, where
                \begin{align*}
                        &A_6=\left(\lim_{t\to0^+}\frac{U(t)}{\varphi(t)}\right)\left(\sup_{t\in(0,L)}\frac{W^\frac1{q}(t)}{U(t)}\right)+ \lim_{t\to L^-}\frac1{\varphi(t)} W^\frac1{q}(L)\\
                        &\quad+\left(\int_0^LU^q(t)\left(\sup_{\tau\in(t,L)}\frac{W(\tau)}{U^q(\tau)}\right)\varphi^{-(q+2)}(t)u(t)\left(\gamma+\int_{(0,t]}U(s)\dnu(s)\right)\left(\delta+\int_{[t,L)}\dnu(s)\right) \dt\right)^\frac1{q}.
                \end{align*}
                \item If $0<q<1$, then $C\approx A_7$, where
                \begin{align*}
                        &A_7= \left(\lim_{t\to0^+}\frac{U(t)}{\varphi(t)}\right) \left(\int_0^LW^\frac{q}{1-q}(t)w(t)U^{-\frac{q}{1-q}}(t) \dt\right)^\frac{1-q}{q} +  \lim_{t\to L^-}\frac1{\varphi(t)} W^\frac1{q}(L)\\
                        &\quad+\left(\int_0^L\xi(t)\varphi^{-(q+2)}(t)u(t)\left(\gamma+\int_{(0,t]}U(s)\dnu(s)\right)\left(\delta+\int_{[t,L)}\dnu(s)\right) \dt\right)^\frac1{q},
                \end{align*}
                where
                \begin{equation}\label{thm:HanickaToLambdaweak:defxi}
                        \xi(t)=\left(\int_0^L W^\frac{q}{1-q}(s)w(s)U^{-\frac{q}{1-q}}(s)\min\{U^\frac{q}{1-q}(t), U^\frac{q}{1-q}(s)\}\ds \right)^{1-q},\ t\in(0,L).
                \end{equation}
                The equivalence constants depend only on the parameter $q$ and the constants $B_1$ and $B_2$. In particular, they are independent of the weights $u$, $v$ and $w$.
        \end{enumerate}
\end{theorem}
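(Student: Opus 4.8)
The plan is to run the proof of Theorem~\ref{thm:HanickaToLambda} again, with the exponent $p$ there replaced by $\infty$ (so that $\varrho$-quasiconcavity is taken with respect to $U$ rather than $U^p$, the H\"older exponents $\tfrac pq,\tfrac p{p-q}$ become $\infty$ and $1$, etc.) and with the function $\varphi$ from \eqref{thm:HanickaToLambdaweak:defphi} playing the role of the function $\varphi(t)=\int_0^L\min\{U^p(t),U^p(s)\}v(s)U^{-p}(s)\ds$ used in the proof of Theorem~\ref{thm:HanickaToLambda}.

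First I would dispose of the degenerate case. The function $\varphi$ is nondecreasing and, as noted in Remark~\ref{rem:HanickaToLambdaweak}, is either finite throughout $(0,L)$ or identically $+\infty$; a short computation with characteristic functions shows that in the latter case $\Gpuvweak=\{0\}$, so $C=0$ and, by Convention~\ref{conv}(\ref{conv:indefinite}), also $A_6=A_7=0$, and the statement holds trivially. So assume $\varphi$ finite; then $\varphi\in Q_U(0,L)$ and the representation \eqref{thm:HanickaToLambdaweakreprephi} exists (this is Remark~\ref{rem:HanickaToLambdaweak}). For every sufficiently large $a$ — the threshold depending only on $q$, $B_1$, $B_2$ — fix a covering sequence $\{x_k\}_{k\in\K^+_-}\in CS(\varphi,U,a)$. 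Now one mirrors \eqref{thm:HanickaToLambda:equivalentexpression}: writing $f^*(t)=\lim_n\int_t^Lh_n(s)\ds$ as an increasing limit of tails of functions $h_n\in\Mpl(0,L)$, using the elementary two-sided estimate $\frac1{U(t)}\int_0^tu(y)\int_y^Lh(s)\ds\dy\approx\int_0^L\frac{U(s)h(s)}{U(s)+U(t)}\ds$ and the monotone convergence theorem, and then invoking the pointwise equivalence
\begin{equation*}
\varphi(s)\approx\esssup_{t\in(0,L)}v(t)\min\Big\{1,\tfrac{U(s)}{U(t)}\Big\},
\end{equation*}
which is immediate from \eqref{thm:HanickaToLambdaweak:defphi}, one obtains
\begin{equation*}
C\approx\sup_{h\in\Mpl(0,L)}\frac{\left(\int_0^L\left(\int_t^Lh(s)\ds\right)^qw(t)\dt\right)^{1/q}}{\int_0^L\varphi(s)h(s)\ds}.
\end{equation*}
By Lemma~\ref{lem:disantidisint} applied with $\widetilde p=1$, $\widetilde h=\varphi$, $\widetilde\varrho=U$, $\widetilde f=Uh$, $\widetilde\alpha=\gamma$, $\widetilde\beta=\delta$, $\widetilde\nu=\nu$, the denominator is moreover equivalent both to the discretized sum $\sum_{k\in\K^+_-}\varphi(x_k)\int_0^L\frac{U(s)h(s)}{U(x_k)+U(s)}\ds$ and to the antidiscretized expression $\gamma\int_0^Lh\ds+\delta\int_0^LU(s)h(s)\ds+\int_{(0,L)}\big(\int_0^L\min\{U(t),U(s)\}h(s)\ds\big)\dnu(t)$, and I would use whichever of the three forms is convenient at a given step.

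From here the argument splits exactly as in Theorem~\ref{thm:HanickaToLambda}; since $p=\infty>q$ always, only the analogues of cases (ii) (for $1\le q<\infty$, producing $A_6$) and (iv) (for $0<q<1$, producing $A_7$) occur. For the upper bounds one discretizes the numerator as in \eqref{thm:HanickaToLambda:disantidisLHS}, estimates each $\int_{x_{k-1}}^{x_k}\big(\int_t^{x_k}h\big)^qw$ by the weighted Hardy inequality ($q\ge1$) or its modification from \citep[Theorem~3.3]{SS:96} ($0<q<1$) — with $\varphi$ disappearing from the Hardy step because its exponent $1/p$ degenerates to $0$ — and bounds $\int_{x_{k-1}}^{x_k}h$ by $\varphi(x_{k-1})^{-1}\int_{x_{k-1}}^{x_k}\varphi h$ using \eqref{prel:CSZ1}--\eqref{prel:CSZ2}. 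Summing over $k$ (with the elementary inequalities for sums of $q$-th powers and the $\ell^\infty$--$\ell^1$ H\"older inequality replacing the step with exponents $\tfrac pq,\tfrac p{p-q}$ in \eqref{thm:HanickaToLambda:suffBIeq1}) reduces everything to discrete quantities built from $W$, $U$, $\varphi$; feeding the representation \eqref{thm:HanickaToLambdaweakreprephi} of $\varphi$, via the reciprocal-representation theorem \citep[Theorem~2.4.4]{EGO:18} as in \eqref{thm:HanickaToLambda:suffBIeq3}, into Lemma~\ref{lem:disantidissup} ($q\ge1$) or into Lemma~\ref{lem:disantidisint}, Lemma~\ref{lem:discantisup} and Theorem~\ref{thm:disantidis} ($0<q<1$), reassembles them into $A_6$, respectively $A_7$ — the factors $\varphi^{-(q+2)}$, $u(t)$, $\gamma+\int_{(0,t]}U(s)\dnu(s)$, $\delta+\int_{[t,L)}\dnu(s)$ arising precisely from that reciprocal representation, and the endpoint limit terms from the $\gamma$- and $\delta$-terms. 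For the lower bounds one first tests with $f$ such that $f^*=\chi_{[0,t)}\varphi(t)^{-1}$ (which exists by \cite[Corollary~7.8, p.~86]{BS} and has $\|f\|_{\Gpuvweak}\approx1$ by the pointwise equivalence for $\varphi$), whence $\sup_{0<t<L}W^{1/q}(t)\varphi(t)^{-1}\lesssim C$, controlling the non-integral parts of $A_6$ and $A_7$; then, exactly as on \citep[pages~340--346]{GP:03}, the saturation of the Hardy inequality applied to the $p=\infty$ analogue of \eqref{thm:HanickaToLambda:necesseq} yields the discrete lower bounds matching the integral terms, once more via the antidiscretization lemmas.

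The step I expect to be the main obstacle is the denominator reformulation, namely $\esssup_{t\in(0,L)}v(t)\int_0^L\frac{U(s)h(s)}{U(s)+U(t)}\ds\approx\int_0^L\varphi(s)h(s)\ds$ for all $h\in\Mpl(0,L)$. The bound ``$\lesssim$'' is immediate — pull the essential supremum inside the integral using the pointwise equivalence for $\varphi$ — but the reverse bound is the genuinely new difficulty relative to the non-degenerate case of \citep{GP:06}: it requires combining the monotonicity of $s\mapsto\int_s^Lh$ and of $t\mapsto\min\{1,U(s)/U(t)\}$ with the quasiconcavity of $\varphi$ to show that a single $t$ already realizes, up to a fixed constant, the supremum appropriate to every $s$ simultaneously. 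The remaining delicate point, routine given Theorem~\ref{thm:disantidis} and the analogous passages in Theorem~\ref{thm:HanickaToLambda} but tedious, is the bookkeeping of the endpoint limit terms, which requires separating the cases $K_-=-\infty$ versus $K_->-\infty$ and $K^+=\infty$ versus $K^+<\infty$ each time one moves between discrete sums and integrals.
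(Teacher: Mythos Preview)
The main gap is the denominator reformulation you flag as ``the main obstacle'': the equivalence
\[
\esssup_{t\in(0,L)}v(t)\int_0^L\frac{U(s)h(s)}{U(s)+U(t)}\ds\ \approx\ \int_0^L\varphi(s)h(s)\ds
\]
is simply false. Take $L=\infty$, $U(t)=t$, $v(t)=\sqrt{t}$ (so $\varphi(t)=\sqrt t$), and $h(s)=s^{-3/2}\chi_{[1,N]}(s)$. Then $\int_0^\infty\varphi h=\log N$, while for every $t>0$ one computes $\sqrt t\int_1^N\frac{ds}{\sqrt s(s+t)}\lesssim 1$, so the left-hand side stays bounded as $N\to\infty$. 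The ``$\lesssim$'' direction you mention is correct, but the reverse cannot hold with a constant independent of $h$: no single $t$ can realise the supremum for all $s$ simultaneously, precisely because $\varphi$ may vary across infinitely many scales. Consequently your application of Lemma~\ref{lem:disantidisint} to the denominator (yielding a \emph{sum} over $k$) is the wrong tool.

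The paper avoids this by keeping the supremum intact: after replacing $v$ by $\varphi$ via \citep[Lemma~1.5]{GP:06}, it applies Lemma~\ref{lem:discantisup} (not Lemma~\ref{lem:disantidisint}) with $\widetilde p=1$, $\widetilde\varphi=\varphi$, $\widetilde\varrho=U$, $\widetilde f=Uh$, obtaining
\[
\sup_{t\in(0,L)}\varphi(t)\int_0^L\frac{U(s)h(s)}{U(t)+U(s)}\ds\ \approx\ \sup_{k\in\K^+}\int_{x_{k-1}}^{x_k}\varphi(s)h(s)\ds,
\]
i.e.\ a discrete \emph{supremum}, not a sum. The ensuing optimisation in $h$ (maximise an $\ell^q$-type sum subject to an $\ell^\infty$ constraint) is then handled as in \citep[proof of Theorem~1.8]{GP:06}, giving the discrete characterisations \eqref{thm:HanickaToLambdaweak:optimalniqvetsialpha}--\eqref{thm:HanickaToLambdaweak:optimalniqmensialpha}; the antidiscretisation to $A_6$, $A_7$ via \eqref{thm:HanickaToLambdaweakreprephireciproq} and Lemmas~\ref{lem:disantidissup}--\ref{lem:disantidisint} is then as you describe. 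So the fix is to swap the denominator step for Lemma~\ref{lem:discantisup} and rerun the Hardy/saturation arguments against an $\ell^\infty$ rather than an $\ell^1$ denominator.
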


\begin{proof}
        We start off with a few useful observations. Note that
        \begin{equation*}
                \esssup_{t\in(0,L)}\frac1{U(t)}\int_0^t \f(s) u(s)\ds\,v(t)=\esssup_{t\in(0,L)}\frac1{U(t)}\int_0^t \f(s) u(s)\ds\,\varphi(t)
        \end{equation*}
        for every $f\in\M_\mu(X)$ (see~\citep[Lemma~1.5]{GP:06}). Coupling this with an argument similar to that leading to \eqref{thm:HanickaToLambda:equivalentexpression}, we obtain
        \begin{equation}\label{thm:HanickaToLambdaweak:equivalentexpression}
                C \approx \sup_{h\in\Mpl(0,L)}\frac{\left(\int_0^L\left(\int_t^L h(s)\ds \right)^q w(t) \dt\right)^\frac1{q}}{\sup_{t\in(0,L)}\varphi(t)\int_0^L\frac{U(s)h(s)}{U(t)+U(s)}\ds}.
        \end{equation}
        Furthermore, without loss of generality, we may assume that
        \begin{equation}\label{thm:HanickaToLambdaweak:assumphi}
                \varphi(t)<\infty\quad\text{for every $t\in(0,L)$},
        \end{equation}
        for, if this is not the case, then $\varphi$ is actually identically equal to $\infty$ on $(0,L)$, and so it follows that $C=A_6=A_7=0$ (Convention~\ref{conv}(\ref{conv:indefinite}) is used here once again).
       
        By interchanging the order of the suprema, we get the identity
        \begin{equation*}
                \varphi(t)=\esssup_{s\in(0,L)} v(s)\min\left\{1,\frac{U(t)}{U(s)}\right\}\quad\text{for every $t\in(0,L)$}.
        \end{equation*}
        Hence $\varphi\in Q_U(0,L)$.
       
        Since all key ideas were already presented in the proof of Theorem~\ref{thm:HanickaToLambda}, we will only outline the proof instead of going into all detail.
       
        Since $\varphi\in Q_U(0,L)$, there is a covering sequence $\{x_k\}_{k\in\K^+_-}\in CS(\varphi,U, a)$ for each $a>1$. We fix such a sequence for $a>1$ sufficiently large so that the assumptions of the theorems that we are to use are satisfied. An explicit estimate on $a$ may be obtained by careful examination of each step of the proof. What is important is 
that it depends only on the parameter $q$ and on the constants $B_1$ and $B_2$.
       
        By Lemma~\ref{lem:discantisup} applied to $\widetilde p=1$, $\widetilde\varphi=\varphi$, $\widetilde\varrho=U$ and $\widetilde f=Uh$, we have
        \begin{equation*}
                \sup_{t\in(0,L)}\varphi(t)\int_0^L\frac{U(s)h(s)}{U(t)+U(s)}\ds \approx\sup_{k\in\K^+_-}\varphi(x_k)\int_0^L\frac{U(s)h(s)}{U(x_k)+U(s)}\ds \approx\sup_{k\in\K^+}\int_{x_{k-1}}^{x_k}h(s)\varphi(s)\ds .
        \end{equation*}
        By discretizing the right-hand side of \eqref{thm:HanickaToLambdaweak:equivalentexpression} as in \eqref{thm:HanickaToLambda:disantidisLHS} and arguing as in \citep[the proof of Theorem~1.8]{GP:06}, one can show 
that
        \begin{align}
                C^q\approx\sum_{k\in\K^+}\sup_{t\in(x_{k-1},x_k]}\frac{W(t)}{\varphi^q(t)}\quad&\text{if $1\le q<\infty$}\label{thm:HanickaToLambdaweak:optimalniqvetsialpha}\\
                \intertext{and}
                C^q\approx\sum_{k\in\K^+_-}\frac{\xi(x_k)}{\varphi^q(x_k)}\quad&\text{if $0<q<1$}.\label{thm:HanickaToLambdaweak:optimalniqmensialpha}
        \end{align}
        In order to obtain the desired results, we need to anti-discretize the right-hand sides of \eqref{thm:HanickaToLambdaweak:optimalniqvetsialpha} and \eqref{thm:HanickaToLambdaweak:optimalniqmensialpha}.
       
        Thanks to \citep[Theorem~2.4.4]{EGO:18}, using the representation 
\eqref{thm:HanickaToLambdaweakreprephi} of $\varphi$, we have, for every $t\in(0,L)$,
        \begin{equation}\label{thm:HanickaToLambdaweakreprephireciproq}
                \begin{aligned}
                        &\frac{U^q(t)}{\varphi^q(t)}\approx\left(\lim_{t\to0^+}\frac{U(s)}{\varphi(s)}\right)^q + \left(\lim_{s\to L^-}\frac1{\varphi(s)}\right)^pU^q(t)\\
                        &\quad+\int_0^L\min\{U^q(t), U^q(s)\}\varphi^{-(q+2)}(s)u(s)\left(\gamma+\int_{(0,s]}U(\tau)\dnu(\tau)\right)\left(\delta+\int_{[s,L)}\dnu(\tau)\right)\ds.
                \end{aligned}
        \end{equation}
        The equivalence constants in \eqref{thm:HanickaToLambdaweakreprephireciproq} depend only on $q$, $A_1$ and $A_2$.
       
        If $1\leq q$, the equivalence $C\approx A_6$ follows from \eqref{thm:HanickaToLambdaweak:optimalniqvetsialpha}, \eqref{thm:HanickaToLambdaweakreprephireciproq} and Lemma~\ref{lem:disantidissup} 
applied to $\widetilde h=U^{q}\varphi^{-q}$, $\widetilde\varrho=U^q$, 
$\widetilde q=1$ and $\widetilde f=W$.
       
        Assume now that $0<q<1$. Observe that
        \begin{equation}\label{thm:HanickaToLambdaweakalternativexi}
                \begin{aligned}
                        \xi(t)&\approx W(t) + U^q(t)\left(\int_t^LW^\frac{q}{1-q}(s)w(s)U^{-\frac{q}{1-q}}(s)\ds \right)^{1-q}\\
                        &\approx U^{q}(t)\left(\int_0^L \frac{W^\frac{q}{1-q}(s)w(s)}{U^{\frac{q}{1-q}}(t)+U^{\frac{q}{1-q}}(s)}\ds \right)^{1-q}
                \end{aligned}
        \end{equation}
        for every $t\in(0,L)$ thanks to Lemma~\ref{lemma:changeofvariable}. Furthermore, by the same reasoning it is easy to see that
        \begin{equation}\label{thm:HanickaToLambdaweak:limit}
                \left(\int_0^LW^\frac{q}{1-q}(s)w(s)\ds \right)^{1-q}\approx W(L).
        \end{equation}
        The equivalence $C\approx A_7$ follows from \eqref{thm:HanickaToLambdaweak:optimalniqmensialpha}, \eqref{thm:HanickaToLambdaweakalternativexi}, \eqref{thm:HanickaToLambdaweak:limit} and Lemma~\ref{lem:disantidisint} applied to $\widetilde h=U^{q}\varphi^{-q}$, $\widetilde\varrho=U^q$, $\widetilde q=1-q$, $\widetilde f=W^\frac{q}{1-q}w$ and $\widetilde\alpha,\widetilde\beta,\widetilde\nu$ given by \eqref{thm:HanickaToLambdaweakreprephireciproq}.
\end{proof}

\begin{remark}\label{rem:HanickaToLambdaweak}
        Since the function $\varphi$ defined by \eqref{thm:HanickaToLambdaweak:defphi} is in $Q_U(0,L)$, there is always a nonnegative Borel measure $\nu$ on $(0,L)$ that represents $\varphi$ as in \eqref{thm:HanickaToLambdaweakreprephi} with $B_1=1$, $B_2=4$, $\gamma=\lim_{t\to0^+}\varphi(t)$ and $\delta=\lim_{t\to L^-}\frac{\varphi(t)}{U(t)}$ (recall \eqref{prel:repreofQq}).
       
        Whereas the fundamental function of $\Gpuv$ has an integral form if $p\in(0,\infty)$, the fundamental function of $\Gpuvweak$ is given by a~supremum. This is the reason why the statement of Theorem~\ref{thm:HanickaToLambdaweak} is somewhat more implicit than that of Theorem~\ref{thm:HanickaToLambda}.
        Nevertheless, under some extra assumptions, which are often satisfied in applications, we can actually represent $\varphi$ in the form of \eqref{thm:HanickaToLambdaweakreprephi} quite explicitly.
       
        For example, if $v\in Q_U(0,L)$, $v$ is differentiable on $(0,L)$, and $\frac{v'}{u}$ is nonincreasing and locally absolutely continuous on $(0,L)$, then $\varphi=v$ and \eqref{thm:HanickaToLambdaweakreprephi} 
holds with $B_1=B_2=1$, $\gamma=\lim_{s\to0^+}v(s)$, $\delta=\lim_{s\to L^-}\frac{v'(s)}{u(s)}$, and $\dnu(t)=\left(-\frac{v'}{u}\right)'(t)\dt$. This can be proved by integrating by parts upon observing that $\lim_{s\to0^+}U(s)\frac{v'(s)}{u(s)}=0$.
        Moreover, under these extra assumptions, we can also take $B_1=1$, $B_2=2$, $\gamma=\lim_{s\to0^+}v(s)$, $\delta=\lim_{s\to L^-}\frac{v(s)}{U(s)}$, and $\dnu(t)=\left(-\frac{v'}{u}\right)'(t)\dt$ thanks to the fact that $\lim_{s\to L^-}\frac{v(s)}{U(s)}\ge\lim_{s\to L^-}\frac{v'(s)}{u(s)}$ and $\lim_{s\to L^-}\frac{v(s)}{U(s)}\leq \frac{v(t)}{U(t)}$ for every $t\in(0,L)$.
\end{remark}

\begin{remark}       
        In the setting of Theorem~\ref{thm:HanickaToLambdaweak}, if $0<q<1$, there is an alternative, equivalent expression for $A_7$ under the extra assumption
        \begin{equation}\label{thm:HanickaToLambdaweak:assumxi}
                \xi(t)<\infty\quad\text{for every $t\in(0,L)$},
        \end{equation}
        where $\xi$ is defined by \eqref{thm:HanickaToLambdaweak:defxi} (cf.~Remark~\ref{rem:alternativeexpressionofA4}). Namely, $A_7\approx A_8$, where
        \begin{equation*}
                A_8=\left(\int_0^L\varphi(t)^{-q}\xi(t)^{-\frac{q}{1-q}}W^\frac{q}{1-q}(t)w(t) \dt\right)^\frac1{q}.
        \end{equation*}
        Indeed, the additional assumption implies that $\xi\in Q_{U^q}(0, 
L)$, and the desired equivalence then follows from \eqref{thm:HanickaToLambdaweak:optimalniqmensialpha} coupled with the fact that $C\approx A_7$, 
and Theorem~\ref{thm:disantidis} applied to $\widetilde h=\widetilde{\xi}^\frac{1}{1-q}$, $\widetilde\varrho=U^\frac{q}{1-q}$, $\widetilde\alpha=\widetilde\beta=0$, $\d\widetilde\nu(t)=W^\frac{q}{1-q}(t)w(t)U^{-\frac{q}{1-q}}(t)\dt$, $\widetilde p={1-q}$ and $\widetilde f=U^q\varphi^{-q}$ coupled with \citep[Lemma~4.2.9]{EGO:18}.
       
        The constants $A_7$ and $A_8$ need not, however, be equivalent if 
\eqref{thm:HanickaToLambdaweak:assumxi} is violated. In order to see this, suppose that \eqref{thm:HanickaToLambdaweak:assumphi} holds but \eqref{thm:HanickaToLambdaweak:assumxi} does not. Then $\xi\equiv\infty$ on $(0,L)$, and thus $A_7=\infty$ but $A_8=0$. This detail was probably overlooked in \citep[Theorem~1.8]{GP:06}.
\end{remark}

\begin{remark}\label{rem:sob}
We conclude this paper by outlining a possible application of our results. Let $\Omega\subseteq\mathbb R^n$, $n\ge2$, be a bounded Lipschitz domain, $m\in\N$, $m<n$, and $d\in(0,n-m)$. Let $\mu$ be a $d$-upper Ahlfors measure on $\overline\Omega$, that is, a finite Borel measure on $\overline\Omega$ such that
		\begin{equation*}
			\sup_{\substack{x\in\mathbb R^n\\r>0}}\frac{\mu\big(B(x,r)\cap \overline\Omega\big)}{r^d}<\infty,
		\end{equation*}
		where $B(x,r)$ is the open ball in $\mathbb R^n$ of radius $r$ centered 
at~$x$. Notable examples of such measures are $d$-dimensional Hausdorff measures on $d$-dimensional sets. If $X$ and $Y$ are rearrangement-invariant function spaces satisfying
		\begin{align*}
			\Bigg\|\int_{t^\frac{n}{d}}^1f^*(s)s^{-1+\frac{m}{n}}\ds\,\Bigg\|_{Y(0,1)}&\lesssim \|f\|_{X(0,1)}\\
			\intertext{and}
			\Bigg\|\,t^{-\frac{m}{n-d}}\int_0^\frac{n}{d}f^*(s)s^{-1+\frac{m}{n-d}}\ds\,\Bigg\|_{Y(0,1)}&\lesssim \|f\|_{X(0,1)}
		\end{align*}
		for every $f\in\Mpl(0,1)$, then \cite[Theorem~5.1]{CPS:20} ensures boundedness of a linear Sobolev trace operator
		\begin{equation}\label{rem:sob:tremb}
			\operatorname{T}\colon W^m X(\Omega)\to Y^{\langle\frac{n-d}{m}\rangle}(\Omega,\mu),
		\end{equation}
		where $W^m X(\Omega)$ is a Sobolev-type space of $m$-th order built upon $X(\Omega)$ and $Y^{\langle\frac{n-d}{m}\rangle}(\Omega,\mu)$ is the rearrangement-invariant function space whose norm is defined as
		\begin{equation*}
			\|u\|_{Y^{\langle\frac{n-d}{m}\rangle}(\Omega,\mu)}=\left\|\big(\big(g_u^\frac{n-d}{m}\big)^{**}\big)^\frac{m}{n-d}\right\|_{Y(0,1)},\quad u\in\M_\mu(\overline\Omega),
		\end{equation*}
		where $g_u(t)=u^*(\mu(\overline\Omega) t)$, $t\in(0,1)$.
		
		Since a large number of customary rearrangement-invariant function spaces are instances of Lorentz $\Lambda$-spaces, it is of interest to know how to apply this result of \cite{CPS:20} when $Y$ is a Lorentz $\Lambda$-space. Note that despite this assumption the resulting target space in \eqref{rem:sob:tremb} need not be equivalent to a~$\Lambda$-space (see \citep{hanicka}). 
		
		Supposing that $Y=\Lpv$, one might ask if the target space in \eqref{rem:sob:tremb} may be replaced by~$\Lambda^q(w)$. The answer is positive if $Y^{\langle\frac{n-d}{m}\rangle}(\Omega,\mu)$ embeds in $\Lambda^q(w)$, that is, if there is a~constant $C>0$ such that
		\begin{equation*}
			\left( \int_0^L (\f(t))^{q} w(t)\dt \right)^\frac 1{q} \leq C \left( \int_0^L \left( \frac 1{t} \int_0^t (\f(s))^\frac{n-d}{m}\ds \right)^{\frac{pm}{n-d}} v(t) \dt \right)^\frac 1{p}
		\end{equation*}
		holds for every $f\in\Mpl(0,L)$ with $L=\mu(\overline\Omega)$. This is where the main results of this paper come into play because, by a~standard rescaling argument, the optimal constant $C$ in the inequality above satisfies
		\begin{equation*}
			C^\frac{n-d}{m}=\sup_{\|f\|_{\Gamma^{\widetilde p}(v)\le1}}\|f\|_{\Lambda^{\widetilde q}(w)},
		\end{equation*}
		where $\widetilde p=\frac{pm}{n-d}$ and $\widetilde q=\frac{qm}{n-d}$. Notably, the absence of the ``non-degeneracy'' restrictions is crucial because $L<\infty$.
		
		Furthermore, the results obtained in this paper could also be used to improve some of the compactness results for the Sobolev trace operator \eqref{rem:sob:tremb} obtained in \cite[Theorem~5.3]{CM:20}.
\end{remark}

\paragraph{Acknowledgments}
The authors would like to thank the anonymous referees for their remarks and suggestions, which have led to improvements of the final version of the article.

\end{document}